\newcommand{\weakly}{\rightharpoonup}
\DeclareMathOperator{\Id}{I}
\DeclareMathOperator{\Fix}{Fix}
\newtheorem{thm}{Theorem}[section]
\newtheorem{lemma}[thm]{Lemma}
\newtheorem{remark}[thm]{Remark}
\newtheorem{example}[thm]{Example}
\newtheorem{corollary}[thm]{Corollary}
\newtheorem{proposition}[thm]{Proposition}
\newcommand{\N}{\ensuremath{\mathbb{N}}}
\newcommand{\R}{\ensuremath{\mathbb{R}}}
\begin{document}

	\title{On $\alpha$-Firmly Nonexpansive Operators in $r$-Uniformly Convex  Spaces}
	\author{Arian B\"erd\"ellima \footnotemark[1] 
\and
 Gabriele Steidl\footnotemark[1] 
}
\maketitle
\date{\today}

\footnotetext[1]{Institute of Mathematics,
	TU Berlin,
	Stra{\ss}e des 17. Juni 136, 
	D-10623 Berlin, Germany,
	\{berdellima,steidl\}@math.tu-berlin.de
	}

	\begin{abstract}
		We introduce the class of $\alpha$-firmly nonexpansive and 
		quasi $\alpha$-firmly nonexpansive operators 
		on $r$-uniformly convex Banach spaces. 
		This extends the existing notion from Hilbert spaces, where
		$\alpha$-firmly nonexpansive operators coincide with so-called $\alpha$-averaged operators.  
		For our more general setting, we show that $\alpha$-averaged operators form a subset of
		$\alpha$-firmly nonexpansive operators.
		We develop some basic calculus rules for (quasi) $\alpha$-firmly nonexpansive operators. 
		In particular, we show that their compositions and convex combinations are again (quasi) $\alpha$-firmly nonexpansive.
		Moreover, we will see that quasi $\alpha$-firmly nonexpansive operators enjoy the asymptotic regularity property. Then, based on  Browder's demiclosedness principle, we prove for $r$-uniformly convex Banach spaces that the weak cluster points of the iterates 
		$x_{n+1}:=Tx_{n}$ belong to the fixed point set $\Fix T$ whenever the operator $T$ is nonexpansive and quasi $\alpha$-firmly. 
		If additionally the space has a Fr\'echet differentiable norm or satisfies Opial's property, then these iterates converge weakly to some element in $\Fix T$. 
		Further, the projections $P_{\Fix T}x_n$ converge strongly to this weak limit point. 
		Finally, we give three illustrative examples, where our theory can be applied, namely from
		infinite dimensional neural networks, semigroup theory, 
		and contractive projections in $L_p$, $p \in (1,\infty) \backslash \{2\}$ spaces on probability measure spaces.
	\end{abstract}

\section{Introduction}
Averaged operators  play an important role in the fixed point theory in Hilbert spaces. 
They emerged as a tool to obtain solutions to fixed point problems, 
where the underlying operator is not contractive 
and thus applying  Banach's fixed point theorem becomes inaccessible. 
The most notable appearance of averaged operators in the theory of Hilbert spaces is the classical method of iterations introduced by Krasnoselskij \cite{Krasnoselski} and Mann \cite{Mann}.
The notion of averaged operators can be naturally extended to any topological vector space, in particular to Banach spaces. 
However, unlike in the Hilbert setting, the usefulness of averaged operators in Banach spaces is not immediately obvious.
To make our point that such operators have many desirable properties 
related in particular to convergence theory, 
we introduce a related class of operators acting on $r$-uniformly convex Banach spaces, 
that we call $\alpha$-firmly nonexpansive. 
These operators extend an analogue notion for Hilbert spaces, see, e.g., \cite{LukTamTha18} for a pointwise variant, 
which in fact coincides with the concept of averaged operator there \cite[Chapter 5]{Bauschkebook}. 
For our general setting, we can show that an averaged operator is always $\alpha$-firmly nonexpansive. 
Notions of a firmly nonexpansive operators acting on Banach spaces date back to Bruck \cite{Bruck}, 
who introduced a definition that coincides with the usual one in a Hilbert space 
and was also used by Browder,  \cite[Definition 6]{Browder} under the name firmly contractive.
A somewhat stronger then Bruck's notion 
has found applications also in nonlinear metric spaces, 
in particular geodesic metric spaces, see Ariza-Ruiz  et al. \cite{Lopez,Lopez2} and Ba\v cak \cite{Bacak}, 
and  Reich and Shafrir  \cite{Reich-Shafrir} in hyperbolic spaces. 
In contrast to these definitions, our notation of $\alpha$-firmly nonexpansive and 
quasi $\alpha$-firmly nonexpansive operators  is less geometrically, but more analytically inspired. 
More precisely, it is dictated by the defining inequality for $r$-uniformly convex Banach spaces. 
In particular, Bruck's notion always implies ours, but not conversely. 
Our focus is on convergence properties of iterates generated by  quasi $\alpha$-firmly nonexpansive operators,
where we follow first the way of Opial's theorem \cite{Opial} to prove the desired convergence result. 
Then later by applying 
Browder's demiclosedness principle \cite{Browder-demi} we show that weak cluster points of a nonexpansive operator that has at least one fixed point and is asymptotic regular, all belong to the fixed point set of the operator. If additionally the space has a Fr\'echet differentiable norm then these iterates converge weakly to a certain element in the fixed point set.
We figured out some interesting relations of our approach to operators appearing in semigroup theory
and to contractive projections in $L_p$ spaces on probability measure spaces. 
The later one can be actually seen as conditional expectations \cite{Ando}.
In our future research, we will be also interested in $L_1$ and $L_\infty$ spaces.
At least for finite dimensional $L_\infty$ spaces, averaged operators were recently addressed
in connection with $L_\infty$ Laplacians in imaging in \cite{BHNS2019}. The notion of $\alpha$-firmly nonexpansive operator has also been considered in metric spaces, see for instance \cite{BerdellimaPhD}, \cite{Berdellima-paper} for the case of a CAT$(0)$ space and \cite{Luke} for a recent generalization to (non-linear) metric spaces.

This paper is organized as follows. 
In Section \ref{s:preliminaries} we collect preliminaries about $r$-uniformly convex Banach spaces. 
In particular, we address the relation between different notations and prove
an inequality for projections onto closed, convex sets.
Then, in Section \ref{s:alpha-firmly}, we introduce $\alpha$-firmly nonexpansive operators 
in $r$-uniformly convex Banach spaces. We show the relation to related definitions in the literature,
prove that compositions and convex combinations of such operators remain $\alpha$-firmly nonexpansive
and highlight the connection to  $\alpha$-averaged operators.
Section \ref{s:quasi} deals with quasi $\alpha$-firmly nonexpansive operators, where relations between fixed point sets
are central.
Fixed point iterations for  quasi $\alpha$-firmly nonexpansive operators are studied 
in Section \ref{s:fixedpoint}.
Starting with the proof that quasi $\alpha$-firmly nonexpansive operators are asymptotic regular,
we follows the path of Opial's convergence theorem. 
However, we provide another proof of the theorem
for uniformly convex Banach spaces by means of an auxiliary lemma. 
In general for $r$-uniformly convex Banach spaces that do not satisfy Opial's property we show that the weak cluster points of the iterates of a nonexpansive operator that is also quasi $\alpha$-firmly nonexpansive, all belong to the fixed point set of the operator. Moreover if the space has a Fr\'echet differentiable norm then these iterates converge weakly to a certain element in the fixed point set.
Finally, some possible directions for applying our theory are sketched in Section \ref{sec:illustrations}.

\section{$r$-Uniformly Convex Spaces}\label{s:preliminaries}
A normed space $(X,\|\cdot\|)$ is \emph{uniformly convex}, 
if for every $\varepsilon >0$, there exits $\delta(\varepsilon)>0$ 
such that for all $x,y \in X$  with $\|x\| = \|y\| = 1$ and 
$\|x-y\|\ge 2\varepsilon$ it holds 
$\frac12 \|x+y\| \le 1-\delta(\varepsilon)$. 
Uniformly convex spaces are reflexive and their dual spaces are uniformly convex, too.
Examples of uniformly convex spaces are $L_p$ and $\ell_p$, $p \in (1,\infty)$ as well as
Sobolev spaces $W_p^m$, $p \in (1,\infty)$ \cite{MQR2018}
and 
Orlicz spaces \cite{Kaminska}.
Therefore $L_1$, $L_\infty$ and $C[0,1]$ are not uniformly convex.
Every closed subspace of a uniformly convex Banach space is again uniformly convex.
Moreover, uniformly convex Banach spaces are have the Kadec--Klee property, also known as Radon--Riesz property, meaning that $f_n \weakly f$ and $\|f_n\| \rightarrow \|f\|$ implies $f_n \rightarrow f$.

The notation of uniformly convex normed spaces was introduced by Clarkson \cite{Clarkson}, who examined
integrals of functions mapping from the Euclidean space to a Banach space and established
an analogy to real-valued functions of bounded variation.
Uniformly convex spaces play an important role in approximation theory, since 
in such spaces there exists for every $x \in X$ an element of best approximation in a closed convex set
which is moreover unique, see, e.g. \cite{Cheney}. 
Indeed,  uniformly convex spaces are strictly convex, i.e. the relation
$\|\lambda x + (1-\lambda) y\| < 1$    
is fulfilled for all $x,y \in X$, $x \not = y$ with $\|x\| = \|y\| = 1$ and all $\lambda \in (0,1)$. 

In this paper, we are interested in special uniformly convex spaces.
To this end, we consider the so-called \emph{modulus of convexity} of $X$ given by
$$
\delta_X (\varepsilon) :=
\inf \left\{1- \tfrac12 \left\| x+y\right\|:  \|x\| = \|y\| = 1,\, 
\|x-y\|\ge 2\varepsilon\right\}.
$$
If there exists  $C >0$ and $r \in [2,\infty)$ 
such that 
$\delta_X (\varepsilon) \ge   \left( \frac{\varepsilon}{C} \right)^r$ for every $\varepsilon \in (0,1]$,
then $(X,\|\cdot\|)$ has a \emph{modulus of convexity of power type} $r$. 
By a result of Pisier \cite{Pisier} each uniformly convex Banach space 
admits an equivalent norm with a modulus of convexity of power type $r$ for some $r \ge 2$.
There exist several equivalent definitions in the literature.
For $r \ge 2$, Borwein et al. \cite{Borwein} showed that a Banach space $(X,\|\cdot\|)$ has a modulus of convexity of power type $r$ if and only if the function $\| \cdot\|^r$ is uniformly convex, 
meaning that for every $\varepsilon>0$ it holds 
\begin{equation}	\label{eq:r-unif_borwein}
	\inf_{x,y} \left\{ \frac{\|x\|^r+\|y\|^r}{2} - \Big\|\frac{x+y}{2}\Big\|^r :
	\|x-y\| = 2 \varepsilon \right\}>0.
\end{equation}
It was proved by Ball et al. \cite{Ball} that $(X,\|\cdot\|)$ has a modulus of convexity of power type $r$ if and only if there is a constant $K>0$ such that
\begin{equation}	\label{eq:r-unif}
	\Big\|\frac{x+y}{2}\Big\|^r+\Big\|\frac{x-y}{2K}\Big\|^r \le \frac{\|x\|^r+\|y\|^r}{2}
\end{equation}
for all $x,y\in X$.
Clearly, by the parallelogram law in Hilbert spaces 
\begin{equation}	\label{eq:paral}
	\Big\|\frac{x+y}{2}\Big\|^2+\Big\|\frac{x-y}{2}\Big\|^2 = \frac{\|x\|^2+\|y\|^2}{2},
\end{equation}
these spaces have modulus of convexity of power type $r=2$.
By Clarkson's inequalities \cite{Clarkson},
the spaces $L_p$ and $\ell_p$ have modulus of convexity of power type $r=p$ for $p \in [2,\infty)$ 
and modulus of convexity of power type $r=q=p/(p-1)$ for $p \in (1,2)$,
where $\frac1p + \frac1q = 1$. 
In both cases, we can choose $K=1$ in \eqref{eq:r-unif}.
Actually, it was shown that for $p \in (1,2]$, the $L_p$ spaces are 2-uniformly convex for $p \in (1,2]$ and we have $K= 1/\sqrt{p-1}$, see \cite{Ball,hanner1956}. Other examples of uniformly convex spaces are the Sobolev spaces $W_p^m$, $p \in (1,\infty)$.

We will use another definition.
For $r\in[2,+\infty)$ we say that $(X,\|\cdot\|)$ is an \emph{$r$-uniformly convex space}, 
if there exists a constant $c_r>0$ such that for all $w\in[0,1]$ and all $x,y\in X$ it holds
\begin{equation} \label{eq:p1}
\|(1-w)x + w y\|^r\leq (1-w)\|x\|^r + w\|y\|^r-\frac{c_r}{2}w(1-w)\|x-y\|^r.
\end{equation}
Setting $x=0$ and $y \not = 0$, it follows immediately from the definition that $c_r \le 2$.
Definition \eqref{eq:p1} was used in the more general setting of
geodesic spaces, where the norm is replaced by the geodesic distance, 
by Naor and Siberman \cite{naor_silberman_2011} and also by Ariza-Ruiz et al. \cite{Lopez, Lopez2}.
Note that in a Hilbert space $(X,\| \cdot \|)$, we have equality in \eqref{eq:p1} for $r=2$,
$c_2 = 2$ and all $w \in [0,1]$.
Indeed, by the next proposition, $(X,\| \cdot \|)$  has modulus of convexity of power type $r$
if and only if the space is an $r$-uniformly convex.

\begin{proposition} 	\label{p:p-uniformcvx}
Equation \eqref{eq:p1} with $w = \frac12$ implies \eqref{eq:r-unif} with $K$ determined by $c_r =  \frac{8}{(2K)^r}$.
If	\eqref{eq:r-unif} is fulfilled, then \eqref{eq:p1} holds true with $c_r =  \frac{4}{(2K)^r}$
and with $c_2 =  \frac{8}{(2K)^2}$ if $r=2$.
\end{proposition}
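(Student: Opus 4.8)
The plan is to treat the two implications separately; only the second involves real work. For the first I would put $w=\tfrac12$ into \eqref{eq:p1}: since then $\tfrac{c_r}{2}w(1-w)=\tfrac{c_r}{8}$, inequality \eqref{eq:p1} becomes
\[
\Bigl\|\tfrac{x+y}{2}\Bigr\|^{r}\ \le\ \frac{\|x\|^{r}+\|y\|^{r}}{2}-\frac{c_r}{8}\,\|x-y\|^{r},
\]
and choosing $K$ with $\tfrac{c_r}{8}=\tfrac1{(2K)^{r}}$, i.e. $c_r=\tfrac{8}{(2K)^{r}}$, makes this literally \eqref{eq:r-unif}. There is no difficulty here.

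For the converse I would fix $x,y\in X$, abbreviate $D:=\|x-y\|^{r}$, and work with the \emph{convexity gap}
\[
\psi(w):=(1-w)\|x\|^{r}+w\|y\|^{r}-\bigl\|(1-w)x+wy\bigr\|^{r},\qquad w\in[0,1],
\]
which is continuous and, since $w\mapsto\|(1-w)x+wy\|^{r}$ is convex, concave, with $\psi(0)=\psi(1)=0$; in this language \eqref{eq:p1} is exactly the claim $\psi(w)\ge\tfrac{c_r}{2}w(1-w)D$. The key observation is that \eqref{eq:r-unif} may be applied not only to the pair $x,y$ but, for $0\le t\le\min\{w,1-w\}$, to $u:=(1-w+t)x+(w-t)y$ and $v:=(1-w-t)x+(w+t)y$ (still points of the segment $[x,y]$, with parameters $w\mp t\in[0,1]$). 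Because $\tfrac{u+v}{2}=(1-w)x+wy$, $\tfrac{u-v}{2}=t(x-y)$, and $\|u\|^{r},\|v\|^{r}$ are the values of $\|(1-s)x+sy\|^{r}$ at $s=w-t$ and $s=w+t$, inequality \eqref{eq:r-unif} rearranges to
\[
\psi(w)\ \ge\ \tfrac12\bigl(\psi(w-t)+\psi(w+t)\bigr)+\frac{t^{r}}{K^{r}}\,D,\qquad 0\le t\le\min\{w,1-w\}.
\]

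Taking $w=t=\tfrac12$ gives $\psi(\tfrac12)\ge\tfrac{D}{(2K)^{r}}$. Since $\psi$ is concave with $\psi(0)=\psi(1)=0$, comparing $\psi$ with the chords through $(0,0),(\tfrac12,\psi(\tfrac12))$ and through $(\tfrac12,\psi(\tfrac12)),(1,0)$ yields $\psi(w)\ge 2\min\{w,1-w\}\,\psi(\tfrac12)\ge\tfrac{2}{(2K)^{r}}\min\{w,1-w\}\,D$, and $\min\{w,1-w\}\ge w(1-w)$ turns this into $\psi(w)\ge\tfrac{2}{(2K)^{r}}w(1-w)D$ — that is \eqref{eq:p1} with $c_r=\tfrac{4}{(2K)^{r}}$. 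For $r=2$ I would sharpen this using the full displayed inequality: with $t=\tfrac{|b-a|}{2}$ it reads $\psi(\tfrac{a+b}{2})\ge\tfrac12(\psi(a)+\psi(b))+\tfrac{(b-a)^{2}}{4K^{2}}D$, and together with the identity $\tfrac{(a+b)^{2}}{4}-\tfrac{a^{2}+b^{2}}{2}=-\tfrac{(a-b)^{2}}{4}$ this shows that $q(w):=\psi(w)+\tfrac{D}{K^{2}}w^{2}$ is midpoint-concave; being continuous it is concave, so $q(w)\ge(1-w)q(0)+wq(1)=\tfrac{D}{K^{2}}w$, i.e. $\psi(w)\ge\tfrac{D}{K^{2}}w(1-w)$, which is \eqref{eq:p1} with $c_2=\tfrac{2}{K^{2}}=\tfrac{8}{(2K)^{2}}$.

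The only step requiring genuine care is this passage from a midpoint statement to one uniform in $w$: a single use of \eqref{eq:r-unif} with $u=x$, $v=(1-2w)x+2wy$ and convexity of $\|\cdot\|^{r}$ only produces a defect of order $w^{r}$, which for $r>2$ and small $w$ is much smaller than the required $w(1-w)$, so no naive iteration of that estimate can work and one must exploit concavity of $\psi$ (and, for $r=2$, the quadratic correction). The rest is routine: verifying $0\le t\le\min\{w,1-w\}$ whenever \eqref{eq:r-unif} is invoked, and noting that $\psi$ (hence $q$) is continuous so that midpoint-concavity upgrades to full concavity.
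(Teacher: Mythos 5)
Your proof is correct, and for general $r$ it is essentially the paper's argument in different notation: the paper writes $(1-w)x+wy=(1-2w)x+2w\frac{x+y}{2}$ and invokes convexity of $\|\cdot\|^{r}$, which in terms of your gap function $\psi$ is exactly the chord bound $\psi(w)\ge 2w\,\psi(\tfrac12)$ for $w\in[0,\tfrac12]$; combined with the midpoint estimate $\psi(\tfrac12)\ge D/(2K)^{r}$ this gives the same constant $c_r=4/(2K)^{r}$ (the paper optimizes the condition $c_r\le 4/((2K)^{r}(1-w))$ over $w$, you use $\min\{w,1-w\}\ge w(1-w)$ --- same outcome). Where you genuinely add something is the case $r=2$: the paper merely asserts that ``standard arguments on midpoint convexity'' reduce \eqref{eq:p1} to its $w=\tfrac12$ instance, whereas you actually carry this out, applying \eqref{eq:r-unif} to the pair $u=(1-w+t)x+(w-t)y$, $v=(1-w-t)x+(w+t)y$ to get $\psi\bigl(\tfrac{a+b}{2}\bigr)\ge\tfrac12\bigl(\psi(a)+\psi(b)\bigr)+\tfrac{(b-a)^{2}}{4K^{2}}D$, concluding that $q(w)=\psi(w)+\tfrac{D}{K^{2}}w^{2}$ is midpoint-concave, hence by continuity concave, which yields $\psi(w)\ge\tfrac{D}{K^{2}}w(1-w)$ and $c_2=8/(2K)^{2}$; I checked the algebra and it is a complete and correct filling-in of the step the paper leaves implicit. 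One small caveat: your closing remark that one \emph{must} exploit concavity of $\psi$ because a single use of \eqref{eq:r-unif} produces only a $w^{r}$ defect mischaracterizes the alternative --- applying convexity first to reach the midpoint and then \eqref{eq:r-unif} once (the paper's order of operations) already yields a defect linear in $w$; but that is precisely your chord argument in disguise, so nothing is at stake.
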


\begin{proof}
The first implication is straightforward.

For the second part, we follow the lines of \cite[Remark 2.1]{Zalinescu}.
Suppose that \eqref{eq:r-unif} is fulfilled. 
Then we have for $w\in[0,1/2]$ by convexity of $\| \cdot\|^r$ 
and \eqref{eq:r-unif} that
\begin{align*}
\|(1-w)x+wy\|^r 
&= \Big\|(1-2w)x+2w\Big(\frac{x+y}{2}\Big)\Big\|^r\\
&\leq (1-2w)\|x\|^r+2w\Big\|\frac{x+y}{2}\Big\|^r\\
&\leq (1-2w)\|x\|^r+2w\Big(\frac{1}{2}\|x\|^r+\frac{1}{2}\|y\|^r-\Big \| \frac{x-y}{2K} \Big \|^r)\Big)\\
&=(1-w)\|x\|^p+w\|y\|^r-2w \Big \| \frac{x-y}{2K} \Big \|^r\\
&\le (1-w)\|x\|^r+w\|y\|^r-\frac{c_r}{2}w(1-w)\|x-y\|^r
\end{align*}
for $c_r \le 4/\left((2K)^r(1-w) \right)$. 
The largest constant $c_r$ such that this inequality is fulfilled for all $w\in[0,1/2]$ is
$c_r =  \frac{4}{(2K)^r}$. Note that we get $c_r =  \frac{8}{(2K)^r}$ for $w = \frac12$.

Similarly we can argue for $w\in[1/2,1]$. 

For $r=2$, it can be shown by standard arguments on midpoint convexity
that \eqref{eq:p1} holds true for all $w \in [0,1]$
if and only if it holds true for $w=\frac12$. Then the final claim follows from the above
inequality.
\end{proof}

As already mentioned, projections onto closed, convex sets in uniformly convex spaces are uniquely determined. 
In $r$-uniformly convex spaces they fulfill the additional relation stated in the following proposition.

\begin{proposition} 	\label{p:projection}
	Let $C\subseteq X$ be a closed, convex set in an $r$-uniformly convex space $(X,\| \cdot\|)$. 
	Then, for every $x \in X$,  the orthogonal projection $P_C x$ of $x$ onto $C$ is nonempty and consists of one element. 
	Moreover, it holds for all $x\in X$ and all $y\in C$ that
	\begin{equation}	\label{eq:identity}
	\|x-P_Cx\|^r+\frac{c_r}{2}\|P_Cx-y\|^r\leq\|x-y\|^r.
	\end{equation}
	and for all $x,y \in \in X$ that
	\begin{equation}	\label{eq:identityprojection}
	\|P_Cx-P_Cy\|^r\leq\frac{1}{c_r}\Big(\|x-P_Cy\|^r+\|y-P_Cx\|^r-\|x-P_Cx\|^r-\|y-P_Cy\|^r\Big).
	\end{equation}
\end{proposition}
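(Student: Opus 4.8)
The plan is to treat the three assertions in turn, the first being standard and the last following mechanically from the middle one, so that essentially all the work sits in proving \eqref{eq:identity}. For the existence and uniqueness of $P_C x$, I would simply invoke the facts recalled at the beginning of Section \ref{s:preliminaries}: an $r$-uniformly convex space is uniformly convex, hence reflexive and strictly convex. Given $x\in X$, a minimizing sequence $(y_n)\subseteq C$ for $\inf_{y\in C}\|x-y\|$ is bounded, so by reflexivity it has a weakly convergent subsequence; since $C$ is convex and closed it is weakly closed, so the weak limit lies in $C$, and by weak lower semicontinuity of the norm it attains the infimum. Uniqueness is the usual strict-convexity argument applied to the midpoint of two minimizers. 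I would state this in a couple of lines and move on.

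The core step is \eqref{eq:identity}. Put $p:=P_C x$ and fix $y\in C$. For $w\in(0,1]$ the point $(1-w)p+wy$ belongs to $C$ by convexity, so minimality of $p$ gives $\|x-p\|^r\le\|x-(1-w)p-wy\|^r=\|(1-w)(x-p)+w(x-y)\|^r$. Now I would apply the defining inequality \eqref{eq:p1} of $r$-uniform convexity to the two vectors $x-p$ and $x-y$, using that their difference is $(x-p)-(x-y)=y-p$; this yields
\[
\|x-p\|^r\le(1-w)\|x-p\|^r+w\|x-y\|^r-\tfrac{c_r}{2}\,w(1-w)\,\|p-y\|^r.
\]
Cancelling $(1-w)\|x-p\|^r$ from both sides, dividing by $w>0$, and letting $w\to 0^+$ produces exactly $\|x-p\|^r+\tfrac{c_r}{2}\|p-y\|^r\le\|x-y\|^r$. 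The only thing requiring attention here is the bookkeeping — correctly identifying the difference vector as $y-p$ and keeping the factor $w$ in the right places before passing to the limit — but no deeper idea is involved; in particular this is where the variational characterization of the projection and the geometry of the space both enter.

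Finally, \eqref{eq:identityprojection} follows by applying \eqref{eq:identity} twice and adding. Using the point $x\in X$ together with $P_C y\in C$ gives $\|x-P_Cx\|^r+\tfrac{c_r}{2}\|P_Cx-P_Cy\|^r\le\|x-P_Cy\|^r$, and by symmetry $\|y-P_Cy\|^r+\tfrac{c_r}{2}\|P_Cy-P_Cx\|^r\le\|y-P_Cx\|^r$. Summing these two inequalities, moving the norm-of-residual terms to the right-hand side, and dividing by $c_r>0$ yields the claimed bound. I do not expect any obstacle in this last part; the main (mild) obstacle of the whole proof is setting up the variational inequality and the limit $w\to 0^+$ in the middle step cleanly.
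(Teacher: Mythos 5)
Your proposal is correct and follows essentially the same route as the paper: the key step is exactly the paper's argument of inserting $(1-w)P_Cx+wy\in C$, applying the defining inequality \eqref{eq:p1} to $x-P_Cx$ and $x-y$, cancelling, dividing by $w$, and letting $w\to 0^+$. You additionally spell out the existence/uniqueness argument and the two-fold application of \eqref{eq:identity} that yields \eqref{eq:identityprojection}, both of which the paper leaves implicit, and these details are fine.
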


\begin{proof}
	It remains to prove \eqref{eq:identity}. Then relation \eqref{eq:identityprojection} follows immediately.
	For $x \in C$,  the assumption follows since $c_r \le 2$.
	For  $x\in X\setminus C$ and $y\in C\setminus P_Cx$,
	let $(1-w)P_Cx+wy$, $w\in (0,1)$. 
	Then $(1-w)P_Cx+wy\in C$ since $C$ is convex. By $p$-strongly convexity of $\|\cdot\|^r$, we get
	$$
	\|x-((1-w)P_Cx+wy)\|^r
	\leq
	(1-w)\|x-P_Cx\|^r + w\|x-y\|^r-\frac{c_r}{2}(1-w)w\|P_Cx-y\|^r.$$
	Together with 
	$\|x-P_Cx\|\leq \|x-((1-w) P_Cx+wy)\|$ 
	this implies 
	$$w\|x-P_Cx\|^r+\frac{c_r}{2}(1-w)w\|P_Cx-y\|^r 
	\leq w\|x-y\|^r.$$ 
	Dividing by $w$ and taking limit as $w$ goes to zero, we  get the assertion \eqref{eq:identity}.
\end{proof}

\begin{remark}[Projections in Banach spaces]\label{rem:hilbert}
	It is known that projections are nonexpansive, whenever $X$ is a Hilbert space. 
	In the converse, it is known that if projections onto closed, convex sets in a Banach space $X$ of dimension $\geq 3$ are nonexpansive, then $X$ must be a Hilbert space \cite[Theorem 5.2]{Phelps}. 
In fact we have even the stronger statement that in a non-Hilbert Banach space $X$ of dimension $\geq 3$ no bounded, smooth, closed and convex subset $C$ of $X$ with a nonempty interior is a nonexpansive retract of $X$, that is there exists no nonexpansive mapping $R:X\to C$ such that $Rx=x$ for all $x\in C$  \cite[Theorem 1]{Bruck-retract}.  
For the case of dimension $2$, we can refer to \cite{Gruber} for an exhaustive analysis of nonexpansive retracts. In particular, only the so-called radial projections on certain sets
are nonexpansive mappings under special conditions \cite[Theorem 2]{Gruber}. For a study of various aspects of nonexpansive retracts and retractions in certain Banach and metric spaces, with special emphasis on the so called compact nonexpansive envelope property we refer to \cite{Reich}.

While in general projections in Banach spaces fail to be nonexpansive, in uniformly convex Banach spaces, hence also in $r$-uniformly convex spaces, it can be shown that projections are uniformly continuous on bounded sets, see \cite[Theorem 3.10]{Bacak-Kohlenbach}.
	\end{remark}

\section{$\alpha$-Firmly Nonexpansive Operators} \label{s:alpha-firmly}
Let $(X,\|\cdot\|)$ be an $r$-uniformly convex space and $D,E\subseteq X$ nonempty sets. 
For $\alpha \in (0,1)$, an operator $T:X\to X$ is said to be \emph{$\alpha$-firmly nonexpansive
on} $D\times E$ if for
all $x \in D$ and all $y\in E$ it holds
 \begin{equation} 	\label{eq:firm1}
 	\|Tx-Ty\|^r \leq \|x-y\|^r - \frac{c_r}{2}\frac{1-\alpha}{\alpha}\|(\Id-T)x-(\Id-T)y\|^r.
 \end{equation}
If $D = E = X$, we simply say that $T$ is \emph{$\alpha$-firmly nonexpansive}.
If $T:X\to X$ is $\alpha$-firmly nonexpansive for some $\alpha\in(0,1)$, 
then we see, since $\frac{1-\alpha}{\alpha}$, $\alpha \in (0,1)$ is monotone decreasing, 
that $T$ is $\alpha$-firmly nonexpansive for any $\alpha'\in(\alpha,1)$.

\begin{remark}\label{rem:bruck}
There exist several definitions of firmly nonexpansive operators on Banach spaces in the literature.
Let 
\begin{equation} \label{eq:Bruck1}
\varphi(w;T,x,y):=\|(1-w)x+wTx-((1-w)y+wTy)\|.
\end{equation}
Note that $\varphi(0;T,x,y) = \|x-y\|$ and $\varphi(1;T,x,y) = \|Tx-Ty\|$.
For a fixed $w \in [0,1)$, let us say that an operator $T:X\to X$ is \emph{$w$-firmly nonexpansive}, if 
\begin{equation}\label{eq:Bruck}
\varphi(1;T,x,y) \leq \varphi(w;x,y)
\end{equation}
for all $x,y \in X$. 
Then Bruck \cite{Bruck} called the operator $T$ \emph{firmly nonexpansive} 
if \eqref{eq:Bruck} holds true for all $w \in [0,1)$.
This concept of firmly nonexpansiveness was also used by Browder \cite{Browder} under the name firmly noncontractive. 

A stronger condition was required in nonlinear spaces, namely by
Reich and Shafier \cite{Reich-Shafrir} in hyperbolic spaces and by Ba\'cak \cite{Bacak} and Ariza-Ruiz et al. \cite{Lopez}
in Hadamard spaces. 
These authors called $T:X\to X$ firmly nonexpansive,
if the function $\varphi(\cdot;T,x,y)$ is nonincreasing on $[0,1]$ for all $x,y \in X$.
Indeed, for Banach spaces these definitions coincide.
\end{remark}

By the following proposition, the relation \eqref{eq:Bruck} implies that $T$ is $\alpha$-averaged
for some $\alpha \in [\frac12,1)$. 

 \begin{proposition}	\label{p:Bruck}
	Let $(X,\|\cdot\|)$ be a $r$-uniformly convex space. 
	If the operator $T:X \rightarrow X$ fulfills \eqref{eq:Bruck} for some $w \in [0,1)$,
	then it is $\alpha$-firmly nonexpansive for $\alpha \in [\frac{1}{1+w},1)$. 
	In particular, if $T:X \rightarrow X$ fulfills \eqref{eq:Bruck} for all $w \in [0,1)$,
	i.e., if $T$ is firmly nonexpansive in the sense of Bruck, then it is $\alpha$-firmly nonexpansive for all
	$\alpha \in [\frac12,1)$.
\end{proposition}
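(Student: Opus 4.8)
The plan is to derive \eqref{eq:firm1} directly from the $r$-uniform convexity inequality \eqref{eq:p1}, applied with the convexity weight taken equal to the fixed $w$ from the hypothesis. First I would abbreviate $u := x-y$ and $v := Tx-Ty$, so that $(\Id-T)x-(\Id-T)y = u-v$ and, by \eqref{eq:Bruck1}, $\varphi(w;T,x,y) = \|(1-w)u+wv\|$. With this notation the assumption \eqref{eq:Bruck} reads $\|v\| \le \|(1-w)u+wv\|$, hence $\|v\|^r \le \|(1-w)u+wv\|^r$.

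Next I would apply \eqref{eq:p1} to the pair $u,v$ with convexity parameter $w$, obtaining $\|(1-w)u+wv\|^r \le (1-w)\|u\|^r + w\|v\|^r - \tfrac{c_r}{2}w(1-w)\|u-v\|^r$. Chaining this with the previous bound and cancelling the common term $w\|v\|^r$ yields $(1-w)\|v\|^r \le (1-w)\|u\|^r - \tfrac{c_r}{2}w(1-w)\|u-v\|^r$. If $w=0$ the claimed range $[\tfrac{1}{1+w},1)=[1,1)$ is empty and there is nothing to prove, so I may assume $w \in (0,1)$ and divide by $1-w$ to get $\|v\|^r \le \|u\|^r - \tfrac{c_r}{2}w\|u-v\|^r$. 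Comparing with \eqref{eq:firm1}, it then suffices that $\tfrac{1-\alpha}{\alpha} \le w$, which is equivalent to $\alpha(1+w) \ge 1$, i.e. $\alpha \ge \tfrac{1}{1+w}$; together with $\alpha < 1$ this gives exactly the asserted range $\alpha \in [\tfrac{1}{1+w},1)$.

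For the ``in particular'' part, if \eqref{eq:Bruck} holds for all $w \in [0,1)$, then the estimate $\|v\|^r \le \|u\|^r - \tfrac{c_r}{2}w\|u-v\|^r$ holds for every $w \in (0,1)$; letting $w \uparrow 1$ gives $\|v\|^r \le \|u\|^r - \tfrac{c_r}{2}\|u-v\|^r$, which is precisely \eqref{eq:firm1} with $\alpha = \tfrac12$ (since $\tfrac{1-\alpha}{\alpha}=1$ there), and the monotonicity of the class in $\alpha$ noted right after \eqref{eq:firm1} extends the conclusion to all $\alpha \in [\tfrac12,1)$.

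I do not expect a genuine obstacle here; the argument is a two-line combination of \eqref{eq:Bruck} and \eqref{eq:p1}. The only points that require a little care are that the generic convexity weight in \eqref{eq:p1} must be chosen equal to the fixed $w$ appearing in the hypothesis (not some other value), the degenerate endpoint $w=0$ where the statement is vacuous, and — for the $\alpha=\tfrac12$ endpoint in the last claim — that one cannot simply substitute $w=1$ into \eqref{eq:Bruck} but has to pass to the limit $w \uparrow 1$ (or equivalently invoke the monotonicity in $\alpha$).
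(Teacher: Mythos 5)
Your proposal is correct and follows essentially the same route as the paper: apply the $r$-uniform convexity inequality \eqref{eq:p1} to $\varphi(w;T,x,y)^r$, combine with \eqref{eq:Bruck}, cancel $w\|Tx-Ty\|^r$, and divide by $1-w$. Your explicit treatment of the degenerate case $w=0$ and of the endpoint $\alpha=\tfrac12$ via the limit $w\uparrow 1$ is slightly more careful than the paper's, but it is the same argument.
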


\begin{proof}
Let $T:X\to X$ fulfill \eqref{eq:Bruck}, i.e., 
$
\|Tx-Ty\|
\leq 
\varphi(w;x,y)
$
for all $x,y \in X$ and some $w\in[0,1)$.	
Since $(X,\|\cdot\|)$ is $p$-uniformly convex, where $p\in[2,+\infty]$, we have on the other hand
\begin{align*}
\varphi(w;x,y)^r
&=\|(1-w)(x-y)+w(Tx-Ty)\|^r\\
&\leq  (1-w)\|x-y\|^r+w\|Tx-Ty\|^r-\frac{c_r}{2}w(1-w)\|(\Id-T)x-(\Id-T)y\|^r.	
\end{align*}
Combining both inequalities, we obtain 
\begin{align}
(1-w)\|Tx-Ty\|^r &\leq (1-w)\|x-y\|^r-\frac{c_r}{2}w(1-w)\|(\Id-T)x-(\Id-T)y\|^r,\\
\|Tx-Ty\|^r 
&\leq \|x-y\|^r-\frac{c_r}{2}w\|(\Id-T)x-(\Id-T)y\|^r \\
&\le \|x-y\|^r-\frac{c_r}{2} \frac{1-\alpha}{\alpha} \|(\Id-T)x-(\Id-T)y\|^r 
\end{align}
for $w \ge \frac{1-\alpha}{\alpha}$, resp. $\alpha \ge \frac{1}{1+w}$.
\end{proof}

By the next proposition, convex combinations and compositions of $\alpha$-firmly nonexpansive operators
are again $\alpha$-firmly nonexpansive.

\begin{proposition}	\label{p:convexcombinations}
	Let $T_1,T_2,...,T_n:X\to X$ be $\alpha$-firmly nonexpansive with constants $\alpha_i\in(0,1)$
	and $w_i \in [0,1]$, 	$i=1,2,...,n$ such that $\sum_{i=1}^n w_i = 1$. 
	Then the following relations hold true:
	\begin{itemize}
	\item[i)]
	The operator $T:=\sum_{i=1}^n w_i T_i$ is $\alpha$-firmly nonexpansive with constant 
	$\alpha = \alpha_{\mathrm{max}} :=\max\{\alpha_i:i=1,\ldots,n\}$.
	\item[ii)]
	The operator $T:=T_n...T_2T_1$ is $\alpha$-firmly nonexpansive with constant 
	$\alpha := \Big(1+\displaystyle\frac{1-\alpha_{\mathrm{max}}}{n^{r-1}\alpha_{\mathrm{max}}}\Big)^{-1}$, . 
	\end{itemize}
The same relations are fulfilled for quasi $\alpha$-firmly nonexpansive operators $T_1,T_2,...,T_n:X\to X$.
\end{proposition}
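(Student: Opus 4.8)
The plan is to establish both parts by reducing everything to the defining inequality \eqref{eq:firm1} and exploiting convexity of the function $t \mapsto \|t\|^r$ (equivalently, the power-mean/Jensen inequality $\|\sum w_i z_i\|^r \le \sum w_i \|z_i\|^r$) together with its ``reverse'' companion $\|\sum_{i=1}^n z_i\|^r \le n^{r-1}\sum_{i=1}^n \|z_i\|^r$, which follows from the same convexity applied with uniform weights $1/n$.

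\medskip

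\emph{Part i) (convex combinations).} Fix $x,y \in X$ (for the quasi case, $y \in \Fix T$, and one first checks $\Fix T_i \supseteq \Fix T$ is not needed here — rather one uses that each $T_i$ is quasi $\alpha$-firmly with the \emph{same} $y$; this point I would state carefully). Write $z_i := T_ix - T_iy$ and $u_i := (\Id - T_i)x - (\Id - T_i)y$, so that $Tx - Ty = \sum w_i z_i$ and $(\Id - T)x - (\Id - T)y = \sum w_i u_i$. By convexity of $\|\cdot\|^r$,
\begin{align}
\|Tx - Ty\|^r
&\le \sum_{i=1}^n w_i \|z_i\|^r
\le \sum_{i=1}^n w_i\Big(\|x-y\|^r - \tfrac{c_r}{2}\tfrac{1-\alpha_i}{\alpha_i}\|u_i\|^r\Big)\\
&\le \|x-y\|^r - \tfrac{c_r}{2}\tfrac{1-\alpha_{\max}}{\alpha_{\max}}\sum_{i=1}^n w_i\|u_i\|^r
\le \|x-y\|^r - \tfrac{c_r}{2}\tfrac{1-\alpha_{\max}}{\alpha_{\max}}\Big\|\sum_{i=1}^n w_i u_i\Big\|^r,
\end{align}
where the third inequality uses that $\frac{1-\alpha}{\alpha}$ is decreasing so $\frac{1-\alpha_i}{\alpha_i} \ge \frac{1-\alpha_{\max}}{\alpha_{\max}}$, and the last step is again Jensen applied to $\|\cdot\|^r$. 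This is exactly \eqref{eq:firm1} with constant $\alpha_{\max}$.

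\medskip

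\emph{Part ii) (compositions).} It suffices to treat $n=2$, i.e. $T = T_2 T_1$, and then iterate; the general constant should fall out of the recursion, though I would double-check that the claimed closed form $\big(1 + \tfrac{1-\alpha_{\max}}{n^{r-1}\alpha_{\max}}\big)^{-1}$ is what the $n$-fold iteration actually produces (the naive two-step recursion may give a worse constant, so the factor $n^{r-1}$ rather than $2^{r-1}(\cdots)$ suggests one should instead bound the full telescoped residual $(\Id-T)x-(\Id-T)y = \sum_{i=1}^n \big((\Id-T_i)(T_{i-1}\cdots T_1 x) - (\Id-T_i)(T_{i-1}\cdots T_1 y)\big)$ directly in one shot). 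Concretely, set $x_0 = x$, $y_0 = y$, $x_i = T_i x_{i-1}$, $y_i = T_i y_{i-1}$, and $v_i := (\Id - T_i)x_{i-1} - (\Id - T_i)y_{i-1}$. Applying \eqref{eq:firm1} to each $T_i$ at the pair $(x_{i-1}, y_{i-1})$ and telescoping gives
\begin{equation}
\|Tx - Ty\|^r = \|x_n - y_n\|^r \le \|x - y\|^r - \tfrac{c_r}{2}\tfrac{1-\alpha_{\max}}{\alpha_{\max}}\sum_{i=1}^n \|v_i\|^r.
\end{equation}
Since $(\Id - T)x - (\Id - T)y = (x_0 - y_0) - (x_n - y_n) = \sum_{i=1}^n v_i$, the reverse-Jensen inequality yields $\big\|(\Id-T)x-(\Id-T)y\big\|^r \le n^{r-1}\sum_{i=1}^n \|v_i\|^r$, hence $\sum_i \|v_i\|^r \ge n^{1-r}\|(\Id-T)x-(\Id-T)y\|^r$. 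Substituting back gives \eqref{eq:firm1} with $\frac{1-\alpha}{\alpha} = n^{1-r}\frac{1-\alpha_{\max}}{\alpha_{\max}}$, i.e. exactly the stated $\alpha$. For the quasi version one takes $y \in \Fix T$, notes $T_i$ fixes $y_{i-1}$ at each stage — here I would invoke (or re-prove in passing) that $\Fix T = \bigcap_i \Fix T_i$ for compositions of quasi-nonexpansive maps with a common fixed point, which is the one genuinely nontrivial bookkeeping point — so that $y_i = y$ throughout and the same telescoping applies verbatim.

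\medskip

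\emph{Main obstacle.} The calculus is routine once the two convexity inequalities are in place; the real care is (a) getting the sharp constant in the composition case, which requires telescoping and bounding the \emph{total} residual rather than chaining two-step estimates, and (b) the fixed-point-set bookkeeping in the quasi case, where one must justify that all the intermediate iterates of a common fixed point stay put — this is the step I expect to need an explicit (if short) argument rather than a one-line appeal.
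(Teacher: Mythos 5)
Your proposal is correct and follows essentially the same route as the paper: convexity of $\|\cdot\|^r$ with the monotonicity of $\alpha\mapsto\frac{1-\alpha}{\alpha}$ for part i), and for part ii) the one-shot telescoping via the identity $(\Id-T)=\sum_{i=1}^n(\Id-T_i)S_{i-1}$ combined with the bound $\|\sum_i z_i\|^r\le n^{r-1}\sum_i\|z_i\|^r$, which is exactly how the paper obtains the constant $\bigl(1+\frac{1-\alpha_{\max}}{n^{r-1}\alpha_{\max}}\bigr)^{-1}$. The fixed-point bookkeeping you flag for the quasi case is indeed the only extra ingredient, and the paper handles it exactly as you anticipate, by establishing $\Fix T=\bigcap_i\Fix T_i$ (under the hypothesis that this intersection is nonempty) in a separate proposition.
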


\begin{proof}
i) 	By convexity of $\| \cdot \|^r$, $p\ge 1$, we have
	\begin{align*}
		\|Tx-Ty\|^r& \leq \sum_{i=1}^n w_i\|T_ix-T_iy\|^r
	\end{align*}
	for all $x,y \in X$ and  since the $T_i$ are $\alpha$-firmly nonexpansive, we get
			\begin{align*}
		\sum_{i=1}^nw_i\|T_ix-T_iy\|^r\leq \|x-y\|^r-\frac{c_r}{2}
		\sum_{i=1}^n w_i\frac{1-\alpha_i}{\alpha_i}\|(\Id-T_i)x-(\Id-T_i)y\|^r,
	\end{align*}
for all $x,y\in X$.
We have 
\begin{equation} \label{helper}
\frac{1-\alpha_{\max}}{\alpha_{\max}} \leq \frac{1-\alpha_i}{\alpha_i}
\end{equation} 
for all $i=1,2,...,n$ 
and consequently
	\begin{align*}
	\sum_{i=1}^n w_i\|T_i x-T_i y\|^r\leq \|x-y\|^r-\frac{c_r}{2} \frac{1-\alpha}{\alpha}\sum_{k=1}^n w_i\|(\Id-T_i)x-(\Id-T_i)y\|^r.
	\end{align*}
so that we get by convexity of $\| \cdot \|^r$, $p\ge 1$ for all $x,y\in X$ finally
		\begin{align*}
	\sum_{k=1}^n w_i\|T_i x-T_i y\|^r\leq \|x-y\|^r-\frac{c_r}{2}\cdot \frac{1-\alpha}{\alpha}\|(\Id-T)x-(\Id-T)y\|^r.
	\end{align*}
	ii)
	Let $S_k:=T_k...T_2T_1$ for $k=1,2,...,n$ with the convention $S_0:=\Id$.
	Since the  $T_i$ are $\alpha$-firmly and by \eqref{helper}, we obtain
	\begin{align*}
		\|Tx-Ty\|^r&=\|T_n...T_2T_1x-T_n...T_2T_1y\|^r\\
		&\leq \|S_{n-1}x-S_{n-1}y\|^r-\frac{c_r}{2} \frac{1-\alpha_n}{\alpha_n}\|(\Id-T_n)S_{n-1}x-(\Id-T_n)S_{n-1}y\|^r \\
		&....\\
		&\leq 
		\|x-y\|^r-\frac{c_r}{2}\sum_{i=1}^n\frac{1-\alpha_i}{\alpha_i}\|(\Id-T_i)S_{i-1}x-(\Id-T_i)S_{i-1}y\|^r\\
	 &\leq \|x-y\|^r-\frac{c_r}{2}\frac{1-\alpha_{\max}}{\alpha_{\max}}\sum_{i=1}^n\|(\Id-T_i)S_{i-1}x-(\Id-T_i)S_{i-1}y\|^r.
	\end{align*}
By convexity of $\| \cdot\|^r$, $p\ge1$, we get 
\begin{align*}
\|(\Id-T)x-(\Id-T)y\|^r
&=
n^r\Big\|\frac{1}{n}\sum_{i=1}^n(\Id-T_i)S_{i-1}x-(\Id-T_i)S_{i-1}y\Big\|^r\\
&\leq n^{r-1}\sum_{i=1}^n\|(\Id-T_i)S_{i-1}x-(\Id-T_i)S_{i-1}y\|^r.
\end{align*}
Noting that 
$$(\Id-T)=\sum_{i=1}^n(\Id-T_i)S_{i-1},$$	
and setting $\alpha:=\Big(1+\displaystyle\frac{1-\alpha_{\max}}{ n^{r-1} \alpha_{\max} }\Big)^{-1}\in(0,1)$,
this yields 
\begin{align*}
	\|Tx-Ty\|^r
	&\leq\|x-y\|^r-\frac{c_r}{2n^{r-1}} \frac{1-\alpha_{\max}}{\alpha_{\max}}\|(\Id-T)x-(\Id-T)y\|^r \\
&\leq \|x-y\|^r-\frac{c_r}{2}\frac{1-\alpha}{\alpha}\|(\Id-T)x-(\Id-T)y\|^r \\
\end{align*}  
for all $x,y\in X$.
\end{proof}

The concept of $\alpha$-firmly nonexpansive operators is closely related to those of 
$\alpha$-averaged operators.
Recall that an operator $T:X\to X$ is \emph{$\alpha$-averaged with averaging constant} $\alpha\in(0,1)$,
 if there exists a nonexpansive operator $R:X\to X$ such that 
	\begin{equation} 	\label{eq:avg}
	T=(1-\alpha)\Id+\alpha R.
	\end{equation}
	
\begin{remark}	
For Hilbert spaces $X= H$  it can be shown that \eqref{eq:avg} holds true if and only if $T:H \rightarrow H$ fulfills
\begin{equation} 	\label{eq:firm_hilbert}
 	\|Tx-Ty\|^2 \leq \|x-y\|^2 - \frac{1-\alpha}{\alpha}\|(\Id-T)x-(\Id-T)y\|^2,
 \end{equation}
see, e.g. \cite[Proposition 4.25]{Bauschkebook}. Therefore, with $r=2$ and $c_2=2$, 
the $\alpha$-firmly nonexpansive operators coincide with the $\alpha$-averaged operators on Hilbert spaces.
\end{remark}

We will see, that the remark  is no longer true for general $r$-uniformly convex spaces.
But first let us consider some useful properties of averaged operators.
These properties were proved with different averaging constants for Hilbert spaces in 
\cite{Bauschkebook,Comb-Yamada} based on the equivalent relation \eqref{eq:firm_hilbert}.
Since we have not found a proof for general Banach spaces in the literature, we give it in the appendix.

\begin{proposition}	\label{p:avgcvx}
	Let $T_1,...,T_n:X\to X$ be $\alpha$-averaged operators with averaging constants $\alpha_i\in(0,1)$ 
	and $w_i \in [0,1]$, 	$i=1,2,...,n$ such that $\sum_{i=1}^n w_i = 1$. 
	Then the following relations hold true:
	\begin{itemize}
	\item[i)]
	The operator $T:=\sum_{i=1}^n w_i T_i$ is $\alpha$-averaged with averaging constant $\alpha:=\sum_{i=1}^n w_i\alpha_i$.
	\item[ii)]
	Then the operator $T:=T_n...T_2T_1$ is $\alpha$-averaged with averaging constant $\alpha = 1- \Pi_{i=1}^n (1- \alpha_i)$. 
	\end{itemize}
\end{proposition}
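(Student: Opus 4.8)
The plan is to reduce everything to one elementary observation: a convex combination of nonexpansive operators is again nonexpansive (immediate from the triangle inequality and positive homogeneity of the norm), and an $\alpha$-averaged operator is in particular nonexpansive. Notably none of this requires $r$-uniform convexity; the proposition is really a statement about the representation \eqref{eq:avg}.

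For part i), I would write $T_i = (1-\alpha_i)\Id + \alpha_i R_i$ with $R_i$ nonexpansive, insert this into $T = \sum_{i=1}^n w_i T_i$ and collect terms to get $T = (1-\alpha)\Id + \alpha R$ with
$$ \alpha := \sum_{i=1}^n w_i \alpha_i, \qquad R := \sum_{i=1}^n \frac{w_i\alpha_i}{\alpha}\, R_i . $$
Then I would check that the coefficients $\frac{w_i\alpha_i}{\alpha}$ are nonnegative and sum to $1$, so $R$ is a convex combination of the nonexpansive $R_i$ and hence nonexpansive; and that $\alpha \in (0,1)$, which holds because each $\alpha_i \in (0,1)$, the $w_i$ form a probability vector with at least one positive entry, and $\sum_i w_i(1-\alpha_i) > 0$.

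For part ii), I would first settle $n=2$. Expanding $T_2 T_1 x$ with $T_i = (1-\alpha_i)\Id + \alpha_i R_i$ gives
$$ T_2 T_1 x = (1-\alpha_1)(1-\alpha_2)\, x + \alpha_1(1-\alpha_2)\, R_1 x + \alpha_2\, R_2 T_1 x . $$
Setting $\alpha := 1 - (1-\alpha_1)(1-\alpha_2) = \alpha_1 + \alpha_2 - \alpha_1\alpha_2$, the two non-identity coefficients sum to $\alpha$, so $T_2T_1 = (1-\alpha)\Id + \alpha R$ with $R := \frac{\alpha_1(1-\alpha_2)}{\alpha}R_1 + \frac{\alpha_2}{\alpha}R_2 T_1$. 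Since $T_1$ is $\alpha_1$-averaged it is nonexpansive, hence so is $R_2 T_1$, so $R$ is a convex combination of two nonexpansive operators and therefore nonexpansive; and clearly $\alpha \in (0,1)$. The general case then follows by induction on $n$: writing $S_k := T_k\cdots T_1$ with $S_0 := \Id$ and assuming $S_{n-1}$ is averaged with constant $1 - \prod_{i=1}^{n-1}(1-\alpha_i)$, I apply the two-operator case to $T_n$ and $S_{n-1}$ to conclude that $S_n = T_n S_{n-1}$ is averaged with constant $1 - (1-\alpha_n)\prod_{i=1}^{n-1}(1-\alpha_i) = 1 - \prod_{i=1}^n(1-\alpha_i)$.

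There is no substantial obstacle; the computation is routine. The only points needing a little care are verifying that the operator $R$ constructed in each case is genuinely a convex combination (nonnegative weights that sum to one) and that the resulting averaging constant lies in $(0,1)$. In contrast to the $\alpha$-firmly nonexpansive case of Proposition \ref{p:convexcombinations}, this argument never invokes the defining inequality \eqref{eq:p1}, which is also why the averaging constants obtained here are sharper than the $\alpha$-firm constants one would get by combining Proposition \ref{p:convexcombinations} with the averaged/firm comparison.
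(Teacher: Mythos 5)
Your proposal is correct and follows essentially the same route as the paper's proof in the appendix: the identical decompositions $T=(1-\alpha)\Id+\alpha R$ with the same $\alpha$ and $R$ in both parts, nonexpansivity of $R$ via convex combination, and induction on $n$ for the composition. The only cosmetic difference is that you note $R_2T_1$ is nonexpansive directly as a composition of nonexpansive maps, whereas the paper expands $T_1$ and invokes convexity of the norm — the same estimate in different packaging.
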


The next proposition states that the set of $\alpha$-firmly nonexpansive operators include those of $\alpha$-averaged operators. 

\begin{proposition}\label{prop:relation}
	\label{p: 1}
	Let $(X,\|\cdot\|)$ be a $r$-uniformly convex space. Then 
	an $\alpha$-averaged operator is $\alpha$-firmly nonexpansive.
\end{proposition}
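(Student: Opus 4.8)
The plan is to apply the defining inequality \eqref{eq:p1} of an $r$-uniformly convex space directly, with the weight $w=\alpha$, to the pair of vectors built from the nonexpansive operator $R$ appearing in the averaged representation $T=(1-\alpha)\Id+\alpha R$. First I would record the two elementary identities
\begin{equation*}
Tx-Ty=(1-\alpha)(x-y)+\alpha(Rx-Ry),\qquad
(\Id-T)x-(\Id-T)y=\alpha\bigl((\Id-R)x-(\Id-R)y\bigr),
\end{equation*}
which follow at once from \eqref{eq:avg}. Then, applying \eqref{eq:p1} with $w=\alpha$, $x\rightsquigarrow x-y$ and $y\rightsquigarrow Rx-Ry$, I obtain
\begin{equation*}
\|Tx-Ty\|^r\le(1-\alpha)\|x-y\|^r+\alpha\|Rx-Ry\|^r-\frac{c_r}{2}\alpha(1-\alpha)\bigl\|(\Id-R)x-(\Id-R)y\bigr\|^r.
\end{equation*}

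Next I would use that $R$ is nonexpansive, so $\|Rx-Ry\|^r\le\|x-y\|^r$, to absorb the first two terms on the right-hand side into $\|x-y\|^r$. This leaves
\begin{equation*}
\|Tx-Ty\|^r\le\|x-y\|^r-\frac{c_r}{2}\alpha(1-\alpha)\bigl\|(\Id-R)x-(\Id-R)y\bigr\|^r.
\end{equation*}
Substituting back the second identity above in the form $\|(\Id-R)x-(\Id-R)y\|^r=\alpha^{-r}\|(\Id-T)x-(\Id-T)y\|^r$ turns the penalty term into $\frac{c_r}{2}\,\frac{1-\alpha}{\alpha^{r-1}}\,\|(\Id-T)x-(\Id-T)y\|^r$.

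Finally, since $r\ge 2$ and $\alpha\in(0,1)$, we have $\alpha^{r-1}\le\alpha$, hence $\frac{1-\alpha}{\alpha^{r-1}}\ge\frac{1-\alpha}{\alpha}$, and the inequality \eqref{eq:firm1} defining $\alpha$-firm nonexpansiveness follows. There is essentially no hard step here; the only thing to get right is the bookkeeping of exponents—in particular the passage from the factor $\alpha(1-\alpha)$ produced by \eqref{eq:p1} to the factor $\frac{1-\alpha}{\alpha}$ required in \eqref{eq:firm1}, which is exactly where the comparison $\alpha^{r-1}\le\alpha$ (valid because $r\ge2$) is used and where the Hilbert-space case $r=2$ becomes an equality of constants. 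One should also note that for $r>2$ the resulting bound is strictly weaker than an equality, which is the reason the converse (every $\alpha$-firmly nonexpansive operator is $\alpha$-averaged) fails in general $r$-uniformly convex spaces.
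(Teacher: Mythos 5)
Your proof is correct and follows essentially the same route as the paper: apply the $r$-uniform convexity inequality \eqref{eq:p1} with $w=\alpha$ to the averaged decomposition, use nonexpansivity of $R$, rewrite the residual term via $(\Id-T)=\alpha(\Id-R)$, and weaken the constant using $\alpha^{r-1}\le\alpha$ for $r\ge 2$. The only difference is cosmetic bookkeeping (the paper isolates $\alpha(\|x-y\|^r-\|Rx-Ry\|^r)$ on the left before invoking nonexpansivity, whereas you absorb it immediately), so no further comment is needed.
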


\begin{proof}
	Let $T:X\to X$ be an $\alpha$-averaged operator with averaging constant $\alpha\in(0,1)$. 
	By \eqref{eq:avg} there exists a nonexpansive operator $R:X\to X$ such that $T:=(1-\alpha)\Id+\alpha R$.
	For $x,y\in X$, we obtain by \eqref{eq:p1} with $w = \alpha$ that
	\begin{align*}
		 \|Tx-Ty\|^r\leq (1-\alpha)\|x-y\|^r+\alpha\|Rx-Ry\|^r-\frac{c_r}{2}\alpha(1-\alpha)\|(\Id-R)x-&(\Id-R)y\|^r
	\end{align*}
	and rearranging terms 
	\begin{align*}
\alpha(\|x-y\|^r-\|Rx-Ry\|^r)\leq \|x-y\|^r-\|Tx-Ty\|^r-\frac{c_r}{2}\frac{1-\alpha}{\alpha^{r-1}}\|(\Id-T)x-&(\Id-T)y\|^r.
	\end{align*}
	Now the nonexpansivity of $R$ and the fact that $\alpha^{r-1}\leq\alpha$ for $r \geq 2$ and $\alpha\in[0,1]$ yields
the assertion
	\begin{align*}
0\leq \|x-y\|^r-\|Tx-Ty\|^r-\frac{c_r}{2} \frac{1-\alpha}{\alpha}\|(\Id-T)x-&(\Id-T)y\|^r.
	\end{align*}
\end{proof}

The following examples show that the reverse inclusion is in general not true.

\begin{example}
Equip $\mathbb{R}^n$ with the norm $\|x\|:=(|x_1|^r+|x_2|^r+...+|x_n|^r)^{1/r}$ 
for some $p\geq2.$ Let $\mathbb{B}(0,r)$ 
be the closed ball at $0$ of radius $r<1$. Let $R:\mathbb{R}^n\to \mathbb{R}^n$ be the operator defined as 
\begin{align}
Rx:=\left\{\begin{array}{ll}
-\displaystyle\frac{x}{r}\;& x\in \mathbb{B}(0,r)\\
0\;&x\notin \mathbb{B}(0,r)
\end{array}\right.
\end{align}
Given $\alpha\in(0,1)$ define 
\begin{equation}
	\label{l:T}
	Tx:=(1-\alpha)x+\alpha Rx\quad\forall x\in\mathbb{R}^n.
\end{equation}
Clearly the operator $R$ is not nonexpansive. Define the set $D(r):=\{x\in \mathbb{R}^n\;|\;\|y\|>1+r\}$. 
Note that for all $x\in \mathbb{B}(0,r), y\in D(r)$ we have
\begin{align*}
\|Tx-Ty\|^r&=\|(1-\alpha-\alpha \frac{1}{r})x-(1-\alpha)y\|^r=\|-\alpha \frac{x}{r}+(1-\alpha)(x-y)\|^r\\&
\leq \alpha \frac{1}{r^r}\|x\|^r+(1-\alpha)\|x-y\|^r-\frac{c_r}{2}\alpha(1-\alpha)\|(1+\frac{1}{r})x-y\|^r\\
&\leq \alpha +(1-\alpha)\|x-y\|^r-\frac{c_r}{2}\alpha(1-\alpha)\|(1+\frac{1}{r})x-y\|^r\\
&\leq\alpha(1-\|x-y\|^r)+\|x-y\|^r-\frac{c_r}{2}\frac{1-\alpha}{\alpha}\|(\Id-T)x-(\Id-T)y\|^r\\
&\leq\|x-y\|^r-\frac{c_r}{2}\frac{1-\alpha}{\alpha}\|(\Id-T)x-(\Id-T)y\|^r. 
\end{align*}
Therefore $T$ is $\alpha$-firmly nonexpansive on $\mathbb{B}(0,r)\times D(r)$.
\end{example}

\begin{example}
	Let $(X,\|\cdot\|)$ be an $\ell_p$ space for some $p>2$. 
	Let $T:X\to X$ be an operator acting by the formula $Tx:=(x_1,0,...,0,...)$ for all $x\in X$ where $x:=(x_1, x_2,...)$. 
	Let $\alpha_r:=c_r/(c_r+2)$ then $T$ is $\alpha$-firmly nonexpansive with $\alpha:=\alpha_r$. Indeed 
	\begin{align*}
	\|Tx-Ty\|^r+\frac{c_r}{2}\frac{1-\alpha_r}{\alpha_r}\|(\Id-T)x-(\Id-T)y\|^r&=\|Tx-Ty\|^r+\|(\Id-T)x-(\Id-T)y\|^r\\
	&
	=|x_1-y_1|^r+\sum_{i=2}^{\infty}|x_i-y_i|^r\\
	&
	=\sum_{i=1}^{\infty}|x_i-y_i|^r =\|x-y\|^r.
	\end{align*}
Suppose that $R:X\to X$ is some operator such that $T:=(1-\alpha_r)\Id+\alpha_r R$. By $r$-uniform convexity of $(X,\|\cdot\|)$ we obtain 
\begin{align*}
\|Tx-Ty\|^r\leq (1-\alpha_p)\|x-y\|^r+\alpha_p\|Rx-Ry\|^r-\frac{c_r}{2}\alpha_p(1-\alpha_p)\|(\Id-R)x-
&(\Id-R)y\|^r.
\end{align*}
Rearranging terms and using identity \eqref{eq:avg} for the operator $R$ we get
\begin{align*}
\alpha_r(\|x-y\|^r-\|Rx-Ry\|^r)
\leq 
\|x-y\|^r-\|Tx-Ty\|^r-\frac{c_r}{2}\frac{1-\alpha_p}{\alpha_p^{p-1}}\|(\Id-T)x-
&(\Id-T)y\|^r.
\end{align*}
By the fact that $\alpha_p^{r-1}<\alpha_p$ for $r>2$ we obtain for all $x,y\neq 0$ 
that
$$\alpha_r(\|x-y\|^r-\|Rx-Ry\|^r)<\|x-y\|^r-\|Tx-Ty\|^r-\frac{c_r}{2} 
\frac{1-\alpha_r}{\alpha_r}\|(\Id-T)x-(\Id-T)y\|^r.$$	
By the above calculations the right hand side vanishes. 
Therefore $\|x-y\|^r<\|Rx-Ry\|^r$ for all $x,y\neq 0$. 
Hence $R$ cannot be a nonexpansive operator.
\end{example}

\section{Quasi $\alpha$-Firmly Nonexpansive Operators}\label{s:quasi}
%
Let $(X,\|\cdot\|)$ be an $r$-uniformly convex space.
For $\alpha \in (0,1)$, an operator $T:X\to X$ is said to be
\emph{quasi $\alpha$-firmly nonexpansive}, if $\Fix T\neq\emptyset$ and
for all $x\in X$ and all $y\in\Fix T$ it holds
 \begin{equation} 	\label{eq:firm1_quasi}
 	\|Tx-Ty\|^r \leq \|x-y\|^r - \frac{c_r}{2}\frac{1-\alpha}{\alpha}\|(\Id-T)x-(\Id-T)y\|^r.
 \end{equation}
It follows directly from the definition that
a quasi $\alpha$-firmly nonexpansive operator $T:X\to X$ fulfills for all $x\in X$ and all $y\in\Fix T$ the relation
	\begin{equation}	\label{eq:quasi}
	\|Tx-y\|^r \leq \|x-y\|^r-\frac{c_r}{2}\frac{1-\alpha}{\alpha}\|Tx-x\|^r.
	\end{equation}
We say that $T:X\to X$ is \emph{quasi nonexpansive}, if $\Fix T\neq\emptyset$ and for all $x\in X$ and all $y\in\Fix T$,
$$
\|Tx-y\| \leq \|x-y\|.
$$	

\begin{lemma}	\label{l:fixedpointset}
	Let $T:X\to X$ be a quasi nonexpansive operator. Then $\Fix T$ is a nonempty closed, convex set. 
\end{lemma}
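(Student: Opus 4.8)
The plan is to treat the three claims in turn; nonemptiness of $\Fix T$ is built into the definition of a quasi nonexpansive operator, so only closedness and convexity require an argument.

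For closedness I would pick a sequence $(x_n)_{n\in\N}\subseteq\Fix T$ with $x_n\to x^\ast$ and show $x^\ast\in\Fix T$. Applying the quasi nonexpansiveness inequality $\|Tx-y\|\le\|x-y\|$ with $x=x^\ast$ and $y=x_n\in\Fix T$ gives $\|Tx^\ast-x_n\|\le\|x^\ast-x_n\|\to 0$, so $x_n\to Tx^\ast$; by uniqueness of limits $Tx^\ast=x^\ast$. Note that this does not use any continuity of $T$.

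For convexity, fix $x,y\in\Fix T$ and $w\in[0,1]$ and set $z:=(1-w)x+wy$; the cases $x=y$ or $w\in\{0,1\}$ being trivial, assume $x\ne y$ and $w\in(0,1)$. Quasi nonexpansiveness gives $\|Tz-x\|\le\|z-x\|=w\|x-y\|$ and $\|Tz-y\|\le\|z-y\|=(1-w)\|x-y\|$, and combining these with the triangle inequality
\[
\|x-y\|\le\|x-Tz\|+\|Tz-y\|\le w\|x-y\|+(1-w)\|x-y\|=\|x-y\|
\]
forces all intermediate inequalities to be equalities. In particular $u:=x-Tz$ and $v:=Tz-y$ are nonzero with $\|u+v\|=\|u\|+\|v\|$. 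Since $X$ is uniformly convex, hence strictly convex (see Section \ref{s:preliminaries}), this equality case of the triangle inequality forces $u$ and $v$ to be positive multiples of one another; writing $u=\lambda v$ with $\lambda=\|u\|/\|v\|=w/(1-w)$ and solving $x-Tz=\tfrac{w}{1-w}(Tz-y)$ for $Tz$ yields $Tz=(1-w)x+wy=z$, so $z\in\Fix T$.

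I expect the convexity part to be the only place requiring care: one has to invoke the fact that uniformly convex spaces are strictly convex in order to exploit the equality case in the triangle inequality. As an alternative avoiding this, one could instead feed $u=x-Tz$ and $v=Tz-y$, together with the equalities $\|u\|=w\|x-y\|$ and $\|v\|=(1-w)\|x-y\|$, into the $r$-uniform convexity inequality \eqref{eq:p1} to pin down $Tz$ directly, but the strict convexity route is the shorter one.
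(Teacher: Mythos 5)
Your proposal is correct and follows essentially the same route as the paper: closedness via $\|Tx^\ast-x_n\|\le\|x^\ast-x_n\|\to 0$, and convexity by forcing equality in the triangle inequality $\|x-y\|\le\|x-Tz\|+\|Tz-y\|$ and then pinning $Tz$ down to the point $z$ on the segment. If anything, you are more explicit than the paper in flagging that strict convexity (implied by uniform convexity) is what turns the equality case of the triangle inequality into collinearity; the paper uses this implicitly when it asserts $Tx_w\in[x_1,x_2]$ and then finishes with a short case analysis instead of your direct computation of $Tz$.
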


\begin{proof}	
	By definition $\Fix T$ is nonempty.
	Let $(x_n)_{n \in \mathbb N} \subseteq\Fix T$ 
	such that $x_n\to x$ for some $x\in X$.
	Then, by the triangle inequality and since $T$ is quasi nonexpansive, we obtain
	$$0\leq\|Tx-x\|\leq \|Tx-x_n\|+\|x_n-x\|\leq 2\|x_n-x\|\to 0\;\text{as}\;n\rightarrow +\infty.$$
	Therefore $\Fix T$ is a closed set. 
	Now let $w\in[0,1]$ and $x_1,x_2\in\Fix T$. 
	Consider the element $x_w:=(1-w)x_1+wx_2\in X$. 
	By the triangle inequality and the quasi nonexpansiveness of $T$, we conclude
	$$\|x_1-x_2\|\leq \|x_1-Tx_w\|+\|Tx_w-x_2\|\leq \|x_1-x_w\|+\|x_w-x_2\|=\|x_1-x_2\|.$$
	Therefore, $Tx_w$ is on the line segment $[x_1,x_2]$ joining $x_1$ with $x_2$. Hence we get
	\begin{align*}
& \|x_2-x_w\|+\|x_w-Tx_w\|= \|x_2-Tx_w\|\leq \|x_2-x_w\|\quad\text{if}\quad Tx_w\in[x_1,x_w],\\
&\|Tx_w-x_w\|+\|x_w-x_1\|= \|Tx_w-x_1\|\leq \|x_w-x_1\|\quad\text{if}\quad Tx_w\in[x_w,x_2],
\end{align*}
and  consequently $Tx_w=x_w$ so that $\Fix T$ is convex.
\end{proof}

The next proposition describes the fixed point sets of the composition and convex combination 
of finitely many quasi $\alpha$-firmly nonexpansive operators.

\begin{proposition}	\label{p:fixedpointset-compositions}
Let $T_i:X\to X$ be quasi $\alpha$-firmly nonexpansive operators 
with constants $\alpha_i\in(0,1)$, $i=1,2,...,n$ and $w_i \in [0,1]$, 	$i=1,2,...,n$ such that $\sum_{i=1}^n w_i = 1$.
Let $T\coloneqq T_n T_{n-1}...T_2T_1$ and $S := \sum_{i=1}^n w_i T_i$.
If $\bigcap_{i=1}^n\Fix T_i\neq\emptyset$, 
then $\Fix T = \Fix S = \bigcap_{i=1}^n\Fix T_i$. 
In particular, $\Fix T$ is a nonempty, closed, convex set. 
\end{proposition}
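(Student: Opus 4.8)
The plan is to establish the two set equalities $\Fix T = \bigcap_{i=1}^n \Fix T_i$ and $\Fix S = \bigcap_{i=1}^n \Fix T_i$ separately; in both cases one inclusion is trivial and the other uses the strict-descent term in the quasi $\alpha$-firmly nonexpansive inequality \eqref{eq:quasi} together with the hypothesis that the common fixed point set is nonempty. Fix a point $z \in \bigcap_{i=1}^n \Fix T_i$; such a $z$ exists by assumption, and note that $z$ is trivially a fixed point of both $T$ and $S$, so $\bigcap_{i=1}^n \Fix T_i \subseteq \Fix T \cap \Fix S$. It remains to prove the reverse inclusions. By Proposition \ref{p:convexcombinations} (its quasi version) both $T$ and $S$ are quasi $\alpha$-firmly nonexpansive, hence in particular quasi nonexpansive, so Lemma \ref{l:fixedpointset} will give that $\Fix T$ and $\Fix S$ are nonempty, closed and convex once the equalities are shown; this takes care of the final sentence.

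For the convex combination, let $x \in \Fix S$, i.e. $\sum_{i=1}^n w_i T_i x = x$. Applying \eqref{eq:quasi} to each $T_i$ with the common fixed point $z$ and summing with weights $w_i$, I get
\begin{equation*}
\sum_{i=1}^n w_i \|T_i x - z\|^r \le \|x-z\|^r - \frac{c_r}{2}\frac{1-\alpha_i}{\alpha_i}\sum_{i=1}^n w_i \|T_i x - x\|^r .
\end{equation*}
On the other hand, convexity of $\|\cdot\|^r$ gives $\|x-z\|^r = \|Sx - z\|^r = \big\|\sum_i w_i (T_i x - z)\big\|^r \le \sum_i w_i \|T_i x - z\|^r$. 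Combining the two displays forces $\sum_i w_i \tfrac{1-\alpha_i}{\alpha_i}\|T_i x - x\|^r \le 0$, hence $T_i x = x$ for every $i$ (all weights and coefficients being strictly positive, or handled by passing to the indices with $w_i>0$ — this is the one point needing a small remark when some $w_i = 0$). Thus $x \in \bigcap_i \Fix T_i$.

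For the composition, let $x \in \Fix T$, i.e. $T_n \cdots T_1 x = x$, and write $S_0 := \Id$, $S_k := T_k \cdots T_1$. Applying the quasi $\alpha$-firm inequality \eqref{eq:quasi} to $T_k$ at the point $S_{k-1}x$ with fixed point $z$ and telescoping over $k = 1, \dots, n$, and using $\|T_k S_{k-1} x - z\| = \|S_k x - z\|$ and $S_n x = x$, I obtain
\begin{equation*}
\|x - z\|^r \le \|x-z\|^r - \frac{c_r}{2}\sum_{k=1}^n \frac{1-\alpha_k}{\alpha_k}\|(\Id - T_k)S_{k-1}x\|^r ,
\end{equation*}
so every summand vanishes: $T_k S_{k-1} x = S_{k-1} x$ for all $k$. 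From $k=1$ this gives $T_1 x = x$, hence $S_1 x = x$; then $k=2$ gives $T_2 x = T_2 S_1 x = S_1 x = x$, and inductively $T_k x = x$ for all $k$, so $x \in \bigcap_i \Fix T_i$. This telescoping/induction is the only mildly delicate step — one must be careful that $S_{k-1} x = x$ has already been established before reading off $T_k x = x$ at stage $k$ — but it is entirely mechanical. Finally, since $T$ and $S$ are quasi nonexpansive (being quasi $\alpha$-firmly nonexpansive) and have nonempty fixed point sets, Lemma \ref{l:fixedpointset} yields that $\Fix T = \Fix S = \bigcap_{i=1}^n \Fix T_i$ is nonempty, closed and convex.
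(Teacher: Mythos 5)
Your proof is correct, and while the convex-combination half is essentially the paper's own argument (sum the inequalities \eqref{eq:quasi} with weights $w_i$ against a common fixed point, compare with convexity of $\|\cdot\|^r$, and force the residual sum to vanish), your treatment of the composition is a genuinely different and in fact cleaner route. The paper proceeds by induction on $n$: it takes $x\in\Fix T$, splits into three cases according to whether $\tilde Tx\in\Fix T_n$ or $x\in\Fix\tilde T$, and in the remaining case derives the strict contradiction $\|x-y\|^r<\|x-y\|^r$ from the descent term of the last operator combined with quasi nonexpansiveness of the partial composition $\tilde T$. Your telescoping of \eqref{eq:quasi} along the partial compositions $S_0,\dots,S_n$ against a single $z\in\bigcap_i\Fix T_i$ collapses all of this into one inequality $\|x-z\|^r\le\|x-z\|^r-\frac{c_r}{2}\sum_k\frac{1-\alpha_k}{\alpha_k}\|(\Id-T_k)S_{k-1}x\|^r$, which kills every residual at once; the subsequent bootstrap ($S_{k-1}x=x$ must be in hand before reading off $T_kx=x$) is exactly the delicate point you flag, and you handle it correctly. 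What the paper's case analysis buys is nothing extra here; your version avoids both the induction and the appeal to quasi nonexpansiveness of $\tilde T$.

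Three small remarks. First, in your displayed inequality for the convex combination the factor $\frac{1-\alpha_i}{\alpha_i}$ sits outside the sum over $i$; it must of course be inside, as in the conclusion you actually draw. Second, you invoke the quasi version of Proposition \ref{p:convexcombinations} to get that $T$ and $S$ are quasi $\alpha$-firmly nonexpansive, but in the paper that quasi version (Proposition \ref{p:calculus-quasi}) is \emph{derived from} the present proposition, so leaning on it here is circular; fortunately you only need it for the final sentence, which follows more directly since $\bigcap_i\Fix T_i$ is a nonempty intersection of sets that are closed and convex by Lemma \ref{l:fixedpointset} applied to each $T_i$. Third, your caveat about indices with $w_i=0$ is well taken: as stated with $w_i\in[0,1]$ the equality $\Fix S=\bigcap_{i=1}^n\Fix T_i$ can fail (one only gets the intersection over $i$ with $w_i>0$), and the paper's proof silently has the same gap; the hypothesis should read $w_i\in(0,1]$.
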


\begin{proof}
	It is clear that $\bigcap_{i=1}^n\Fix T_i\subseteq \Fix T$ and $\bigcap_{i=1}^n\Fix T_i\subseteq \Fix S$. 
	By assumption, this implies that $\Fix T$  and $\Fix S$ are nonempty.
	\\
	1.
	We show the other direction for $\Fix T$ by induction on $n\in\mathbb{N}$. 
	For $n=1$ the claim is evident. 
	Suppose that the statement is true for the composition of  $k \le n-1$ operators. 
	Set $\tilde T \coloneqq T_{n-1}T_{n-2}...T_2 T_1$. 
	Take $x\in \Fix T$. 
	Then we have three mutually exclusive situations:
	First, $\tilde T x \in \Fix T_n$, 
	which implies $\tilde T x = T_n \tilde T x = Tx=x$. Therefore $x\in \Fix \tilde T \cap \Fix T_n$ 
	and by induction hypothesis $x\in \bigcap_{i=1}^n\Fix T_i$. 
	Second, let $x \in \Fix \tilde T$. Then 
	$x = T x = T_n \tilde T x =T_n x$ 
	implies $x \in\Fix T_n$ and consequently $x\in\Fix T_n \cap \tilde T=\bigcap_{i=1}^n\Fix T_i$. 
	Third, consider $x\notin \Fix \tilde T$ and $\tilde T x \notin \Fix T_n$.
	Then, for any $y \in \Fix T_n\cap \Fix \tilde T$, we have by \eqref{eq:quasi} that
		\begin{align*}
	\|x-y\|^r=\|T_n \tilde T x - T_n y\|^r
	&\leq 
	\|\tilde T x - y\|^r-\frac{c_r}{2}\,\frac{1-\alpha_n}{\alpha_n} \|\tilde T x - T_n \tilde T x\|^r
		< \|\tilde T x - \tilde T y\|^r
	\end{align*}	 
	Since $y \in \Fix \tilde T = \bigcap_{i=1}^{n-1}\Fix T_i$ and $\tilde T$ is the composition of quasi nonexpansive operators,
	it is easy to check that $\tilde T$ is also quasi nonexpansive.
	Hence we get the contradiction
	$
	\|x-y\|^r < \|x-y\|^r
	$.
In summary, this yields	$\Fix T = \bigcap_{i=1}^n\Fix T_i$.
\\
2. Now let $x\in\Fix S$ and $y\in\bigcap_{i=1}^n\Fix T_i$. Then 
	\begin{align*}
	\|x-y\|^r&=\|Tx-Ty\|^r=\|\sum_{i=1}^n w_i(T_ix-T_iy)\|^r
	\leq\sum_{i=1}^nw_i\|T_ix-T_iy\|^r\\
	&\leq \|x-y\|^r-\frac{c_r}{2}\sum_{i=1}^n w_i\frac{1-\alpha_i}{\alpha_i}\|x-T_i x\|^r.
	\end{align*}
From the last inequality we obtain 
	$$\frac{c_r}{2}\sum_{i=1}^n w_i\frac{1-\alpha_i}{\alpha_i}\|x-T_ix\|^r\leq 0,$$
	which is true if and only if $T_i x=x$ for every $i=1,2,...,n$. 
	Thus $x\in\bigcap_{i=1}^n\Fix T_i$. 
	\end{proof}

As an immediate implication of Proposition \ref{p:fixedpointset-compositions} we obtain the following 
calculus rules for the class of quasi $\alpha$-firmly nonexpansive operators.

\begin{proposition}	\label{p:calculus-quasi}
	Let $T_i:X\to X$ be quasi $\alpha$-firmly nonexpansive operators 
with constants $\alpha_i\in(0,1)$, $i=1,2,...,n$ and $w_i \in [0,1]$, 	$i=1,2,...,n$ such that $\sum_{i=1}^n w_i = 1$.
Let $T\coloneqq T_n T_{n-1}...T_2T_1$ and $S := \sum_{i=1}^n w_i T_i$.
If $\bigcap_{i=1}^n\Fix T_i\neq\emptyset$, 
then $s$ and $T$ are also quasi $\alpha$-firmly nonexpansive operators.
\end{proposition}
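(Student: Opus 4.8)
The plan is to reduce the statement to the calculus rules already established for $\alpha$-firmly nonexpansive operators in Proposition~\ref{p:convexcombinations}, whose concluding sentence already asserts that the same relations hold for quasi $\alpha$-firmly nonexpansive operators. The only genuinely new point here is that, to even speak of "quasi $\alpha$-firmly nonexpansive", one must first know that $\Fix S$ and $\Fix T$ are nonempty. This is supplied verbatim by Proposition~\ref{p:fixedpointset-compositions}: under the standing hypothesis $\bigcap_{i=1}^n\Fix T_i\neq\emptyset$ we have
$$
\Fix S=\Fix T=\bigcap_{i=1}^n\Fix T_i\neq\emptyset.
$$
In particular, every $y\in\Fix S$ (resp. $y\in\Fix T$) is a common fixed point of all the $T_i$, so that $T_iy=y$ for $i=1,\dots,n$.

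For $S=\sum_{i=1}^n w_iT_i$ I would repeat the computation of part~i) of Proposition~\ref{p:convexcombinations}, but only for $x\in X$ and $y\in\Fix S$. Each step there invokes the $\alpha$-firm nonexpansiveness of a single $T_i$ at the pair $(x,y)$; since now $y\in\Fix T_i$, the quasi $\alpha$-firm nonexpansiveness of $T_i$ already delivers exactly the inequality used. The remaining manipulations (convexity of $\|\cdot\|^r$ and the bound \eqref{helper}) are unchanged, giving
$$
\|Sx-Sy\|^r\leq\|x-y\|^r-\frac{c_r}{2}\,\frac{1-\alpha}{\alpha}\,\|(\Id-S)x-(\Id-S)y\|^r
$$
with $\alpha=\alpha_{\max}:=\max_i\alpha_i$ for all $x\in X$ and $y\in\Fix S$, which is the claim for $S$.

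For the composition $T=T_nT_{n-1}\cdots T_1$ I would similarly follow part~ii) of Proposition~\ref{p:convexcombinations}, again restricting to $x\in X$ and $y\in\Fix T$. Writing $S_k:=T_k\cdots T_1$ with $S_0:=\Id$, the crucial observation is that $y\in\bigcap_{i=1}^n\Fix T_i$ forces $S_{i-1}y=y$ for every $i$. Hence at the $i$-th step of the telescoping estimate the relevant inequality for $T_i$ is applied to the pair $(S_{i-1}x,S_{i-1}y)=(S_{i-1}x,y)$, whose second entry lies in $\Fix T_i$, so quasi $\alpha$-firm nonexpansiveness of $T_i$ again suffices. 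The convexity step bounding $\|(\Id-T)x-(\Id-T)y\|^r$ and the algebraic identity $\Id-T=\sum_{i=1}^n(\Id-T_i)S_{i-1}$ carry over verbatim, yielding
$$
\|Tx-Ty\|^r\leq\|x-y\|^r-\frac{c_r}{2}\,\frac{1-\alpha}{\alpha}\,\|(\Id-T)x-(\Id-T)y\|^r
$$
with $\alpha=\bigl(1+\tfrac{1-\alpha_{\max}}{n^{r-1}\alpha_{\max}}\bigr)^{-1}$ for all $x\in X$ and $y\in\Fix T$, completing the proof.

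I do not expect a real obstacle: the argument is essentially bookkeeping. The one point that must be made explicit is that, in transcribing the proof of Proposition~\ref{p:convexcombinations}, every vector appearing in the second slot of a difference that is estimated via $\alpha$-firm nonexpansiveness is in fact a fixed point of the operator in question — automatic for the convex combination, and a consequence of $S_{i-1}y=y$ for the composition — so that the weaker quasi inequality is never applied outside its domain of validity.
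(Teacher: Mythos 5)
Your proposal is correct and follows essentially the same route as the paper, which likewise first invokes Proposition~\ref{p:fixedpointset-compositions} to identify $\Fix S=\Fix T=\bigcap_{i=1}^n\Fix T_i$ and then declares the rest ``analogous to the proof of Proposition~\ref{p:convexcombinations}.'' You have merely made explicit the bookkeeping the paper leaves implicit, in particular the observation that $S_{i-1}y=y$ keeps every application of the quasi inequality within its domain of validity.
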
	

\begin{proof}
	Note that by Proposition \ref{p:fixedpointset-compositions}, we have $\bigcap_{i=1}^n\Fix T_i=\Fix T= \Fix S$.
	The rest is analogous as the proof of Proposition \ref{p:convexcombinations}. 
\end{proof}

\section{Fixed Point Theorems} \label{s:fixedpoint}
%
In this section, we are interested in fixed point iterations of quasi $\alpha$-firmly nonexpansive operators.
We start with the important observations that these operators are asymptotic regular,
a property which is essential for the convergence when following ideas of Opial's convergence theorem.
We will give a different proof of Opial's well-known theorem, and
address in a corollary, the convergence of the iterates produced by our quasi $\alpha$-firmly nonexpansive operators.
In the second part of this section, we will deal with separable uniformly convex Banach spaces and will see
that also in this case convergence results can be achieved based on demicloseness considerations.

	An operator $T:X\to X$ is \emph{asymptotic regular at} $x\in X$ 
	if and only if 
	$$\lim_{n \rightarrow \infty} \|T^{n+1}x-T^n x\|=0,$$
	and it is said to be \emph{asymptotic regular} on $X$ if this holds true for every $x\in X$.

\begin{lemma}	\label{l:quasi-asym}
Let $(X,\|\cdot\|)$ be an $r$-uniformly convex Banach space and 
$T:X\to X$  a quasi $\alpha$-firmly nonexpansive operator. Then $T$ is asymptotic regular.
\end{lemma}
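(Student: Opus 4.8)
The plan is to exploit the \emph{Fej\'er-type} monotonicity that is built into inequality \eqref{eq:quasi}. Fix an arbitrary $x\in X$. Since $T$ is quasi $\alpha$-firmly nonexpansive, its fixed point set $\Fix T$ is nonempty by definition, so we may pick some $z\in\Fix T$. Put $x_n:=T^n x$ for $n\in\N$, so that $x_{n+1}=Tx_n$. Applying \eqref{eq:quasi} with $x$ replaced by $x_n$ and with $y=z$ gives
\begin{equation*}
\|x_{n+1}-z\|^r \leq \|x_n-z\|^r - \frac{c_r}{2}\,\frac{1-\alpha}{\alpha}\,\|x_{n+1}-x_n\|^r .
\end{equation*}

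First I would observe that, since $\frac{c_r}{2}\frac{1-\alpha}{\alpha}\ge 0$, the sequence $\bigl(\|x_n-z\|^r\bigr)_{n\in\N}$ is monotonically non-increasing and bounded below by $0$; hence it converges to some limit $\ell\ge 0$, and in particular the consecutive differences $\|x_n-z\|^r-\|x_{n+1}-z\|^r$ tend to $0$ as $n\to\infty$. Rearranging the displayed inequality yields
\begin{equation*}
\frac{c_r}{2}\,\frac{1-\alpha}{\alpha}\,\|x_{n+1}-x_n\|^r \leq \|x_n-z\|^r-\|x_{n+1}-z\|^r \xrightarrow[n\to\infty]{} 0 .
\end{equation*}

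Next I would note that the constant $\frac{c_r}{2}\frac{1-\alpha}{\alpha}$ is \emph{strictly} positive, because $c_r>0$ by the definition of an $r$-uniformly convex space and $\alpha\in(0,1)$. Dividing the last estimate by this constant therefore gives $\|x_{n+1}-x_n\|^r\to 0$, and hence $\|T^{n+1}x-T^nx\|=\|x_{n+1}-x_n\|\to 0$. Since $x\in X$ was arbitrary, $T$ is asymptotic regular on $X$, which is the claim.

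I do not expect any genuine obstacle here: the argument is a direct telescoping/monotonicity estimate, and the only points that require a word of care are that $\Fix T\neq\emptyset$ (so a reference point $z$ exists) and that the coefficient multiplying $\|x_{n+1}-x_n\|^r$ is bounded away from zero uniformly in $n$, both of which are immediate from the standing assumptions. (If one wishes, one can also remark in passing that \eqref{eq:quasi} forces $\|Tx-z\|\le\|x-z\|$, so $T$ is in particular quasi nonexpansive and the iterates $(x_n)$ stay bounded, though this is not needed for the conclusion.)
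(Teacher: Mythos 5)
Your proposal is correct and follows essentially the same route as the paper: apply the quasi $\alpha$-firm inequality \eqref{eq:quasi} with a fixed point as reference, observe that $\bigl(\|T^nx-z\|^r\bigr)_{n}$ is non-increasing and hence convergent, and conclude via the telescoping bound that $\|T^{n+1}x-T^nx\|^r\to 0$ because the coefficient $\tfrac{c_r}{2}\tfrac{1-\alpha}{\alpha}$ is strictly positive. No gaps; your write-up is, if anything, slightly more careful about why the constant is bounded away from zero.
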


\begin{proof}
	By quasi $\alpha$-firmly nonexpansiveness of $T$ we obtain 
	$$\|T^{n+1}x-y\|\leq \|T^{n}x-y\|$$
	for all $n\in\mathbb{N}$ and all $y\in \Fix T$.
	This means that $(\|T^{n}x-y\|)_{n\in\mathbb{N}}$ 
	is a monotone decreasing non-negative sequence.  
	Hence $\lim_{n \rightarrow \infty}\|T^{n}x-y\|=d(y)$ 
	for some real number $d(y)$ (possibly depending on $y$) for every $y\in\Fix T$. 
	Again by quasi $\alpha$-firmly nonexpansiveness we obtain 
	$$\|T^{n+1}x-T^nx\|^r
	\leq 
	\frac{2\alpha c_r}{c(1-\alpha)}\Big(\|T^{n}x-y\|^r-\|T^{n+1}x-y\|^r
	\Big).$$
	Passing in the limit as $n\rightarrow +\infty$ gives the result.
\end{proof}

Let $(X^*,\|\cdot\|_*)$ denote the dual space of $(X,\|\cdot\|)$. 
A sequence $(x_n)_{n \in \mathbb N} \subseteq X$ \emph{converges weakly} to an element $x\in X$, 
denoted by $x_n\overset{w}\to x$ if  $\lim_{n \rightarrow \infty} f(x_n)=f(x)$ for all $f\in X^*$. 
An element $x\in X$ is a \emph{weak cluster point} of a sequence $(x_n)\subseteq X$ 
if and only if there is a subsequence $(x_{n_k})$ of $(x_n)$ such that $x_{n_k}\overset{w}\to x$.  
 A sequence $(x_n)_{n \in \N} \subseteq X$ is \emph{Fej\'er monotone} with respect to a set $S\subseteq X$, if
\begin{equation}	\label{eq:fejer}
	\|x_{n+1}-y\| \leq \|x_{n}-y\|
\end{equation}
for all $y\in S$ and all $n\in\mathbb{N}$.

A  Banach space $X$ is said to satisfy \emph{Opial's property} 
if and only if $x_n\overset{w}\to x$ implies
$$\liminf_{n \rightarrow \infty} \|x_n-x\|<\liminf_{n \rightarrow \infty}\|x_n-y\|$$ 
for all $y\in X\setminus\{x\}$. 
For $x_n\overset{w}\to x$ and $y\in X\setminus\{x\}$, Opial's  property implies
\begin{equation} 		\label{eq:equiv-Opial}
\limsup_{n \rightarrow \infty} \|x_n-x\|
=\lim_{k\rightarrow \infty} \|x_{n_k}-x\|
< 
\liminf_{k\rightarrow \infty}\|x_{n_k}-y\|
\leq 
\limsup_{n \rightarrow \infty}\|x_n-y\|.
\end{equation}
Not all Banach spaces enjoy Opial's property. 
For example Hilbert spaces and $\ell_p$ spaces for $p \in (1,\infty]$ 
satisfy the property, while $L^p((0,2\pi))$, $p \in (1,\infty] \setminus \{2\}$ does not. 

\begin{remark}
Let $X,\| \cdot\|)$ be a uniformly convex Banach space.
If $X$ has a weakly continuous duality mapping, then $X$ satisfies Opial's property \cite[Lemma 3]{Opial}.
Further, Opial's property is equivalent 
to the coincidence of the weak convergence with the so-called $\Delta$-convergence \footnote{$\Delta$-convergence is a notion of weak convergence for metric spaces introduced by \cite[Lim 1976]{Lim}.}, \cite[Theorem 3.19]{Solemini}. Moreover for every separable Banach space, 
there is  an equivalent norm such that Opial's property is satisfied \cite[Theorem 1]{Dulst}.
\end{remark}

In Theorem \ref{thm:opial}, we will recall a classical result of Opial \cite{Opial} 
on the convergence of fixed point iterations in spaces having Opial's property.
Indeed, Opial proved his result for Hilbert spaces based on \v Smulian's 
and mentioned that it can be generalized to uniformly convex Banach spaces.
Here we will give another proof of the theorem, which makes use of the following lemma.

\begin{lemma}	\label{l:weakcluster}
	Let $(X,\|\cdot\|)$ be a uniformly convex Banach space 	with Opial's property
	and
	 $(x_n)_{n \in \N} \subseteq X$ a Fej\'er monotone sequence with respect to a set $S\subseteq X$. 
	If all weak cluster points of  $(x_n)_{n \in \N}$ belong to $S$, then 
	$x_n\overset{w}\to x$ for some $x\in S$.
\end{lemma}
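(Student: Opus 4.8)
The plan is to exploit reflexivity together with Fej\'er monotonicity to extract weak cluster points, and then to use Opial's property to show uniqueness of such a cluster point. First I would observe that, since $(x_n)_{n\in\N}$ is Fej\'er monotone with respect to the nonempty set $S$, it is bounded: fixing any $y_0\in S$ gives $\|x_n-y_0\|\le\|x_0-y_0\|$ for all $n$. A uniformly convex Banach space is reflexive, so the bounded sequence $(x_n)$ has at least one weak cluster point, and by hypothesis every such point lies in $S$.

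The heart of the argument is to prove that there is only one weak cluster point. Suppose, for contradiction, that $x$ and $x'$ are two distinct weak cluster points, say $x_{n_k}\weakly x$ and $x_{m_j}\weakly x'$, both belonging to $S$. Since $x,x'\in S$, Fej\'er monotonicity implies that the full sequences $(\|x_n-x\|)_{n\in\N}$ and $(\|x_n-x'\|)_{n\in\N}$ are monotone nonincreasing, hence convergent; call their limits $d$ and $d'$. Now I would apply Opial's property along the subsequence $x_{n_k}\weakly x$: using \eqref{eq:equiv-Opial} with $y=x'$,
\begin{equation*}
	d=\lim_{k\to\infty}\|x_{n_k}-x\|<\liminf_{k\to\infty}\|x_{n_k}-x'\|=d'.
\end{equation*}
Applying Opial's property along the other subsequence $x_{m_j}\weakly x'$ with $y=x$ gives symmetrically $d'<d$, a contradiction. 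Hence all weak cluster points coincide with a single element $x\in S$.

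Finally, a bounded sequence in a reflexive Banach space whose weak cluster points are all equal to a single point $x$ must converge weakly to $x$: if not, some $f\in X^*$ and some $\varepsilon>0$ would admit a subsequence with $|f(x_n)-f(x)|\ge\varepsilon$, and that subsequence, being bounded, would have a further weakly convergent subsequence whose limit differs from $x$ in the value of $f$, contradicting uniqueness. Therefore $x_n\weakly x$ with $x\in S$, which is the claim. The only slightly delicate point is the extraction of weak cluster points and this last ``all cluster points equal $\Rightarrow$ convergence'' step, both of which rest on reflexivity; everything else is a direct application of Opial's inequality \eqref{eq:equiv-Opial} and the monotonicity furnished by Fej\'er monotonicity.
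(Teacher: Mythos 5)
Your proof is correct and follows the same two-stage structure as the paper's: first, uniqueness of the weak cluster point via Opial's property combined with Fej\'er monotonicity; second, the standard ``unique cluster point plus boundedness implies weak convergence'' step. The difference lies in how the contradiction is derived in the uniqueness stage. The paper works with the subsequential quantities $r_1=\limsup_k\|x_{n_k}-x\|$ and $r_2=\limsup_k\|x_{m_k}-y\|$, assumes w.l.o.g.\ $r_1\le r_2$, and runs an $\varepsilon$-argument that transfers the bound from one subsequence to the other via Fej\'er monotonicity, contradicting the single Opial inequality $r_2<\limsup_k\|x_{m_k}-x\|$ from \eqref{eq:equiv-Opial}. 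You instead observe that, because both cluster points lie in $S$, Fej\'er monotonicity already forces the \emph{full} distance sequences $(\|x_n-x\|)_{n}$ and $(\|x_n-x'\|)_{n}$ to be nonincreasing and hence convergent, to $d$ and $d'$ say; applying Opial's inequality along each of the two subsequences then yields the symmetric pair $d<d'$ and $d'<d$ at once. This is a cleaner execution: it avoids the w.l.o.g.\ ordering and the $\varepsilon$-chase, and it isolates the precise role of Fej\'er monotonicity, namely to upgrade subsequential limits of the distances to limits along the whole sequence. Your final step (testing against a functional $f\in X^*$ and extracting a further weakly convergent subsequence) is an equivalent rephrasing of the paper's weak-neighborhood argument; both rest on reflexivity of uniformly convex spaces and the Eberlein--\v Smulian theorem.
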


\begin{proof}
Since $(x_n)_{n \in \N}$ is Fej\'er monotone with respect to $S$, it is a bounded sequence. 
Hence it has a weakly convergent subsequence. 
First, we prove that $(x_n)_{n \in \N}$ can have at most one  weak cluster point in $S$.
Suppose in contrary, that there exist
two subsequences $(x_{n_k})_{k \in \N}$ and $(x_{m_k})_{k \in \N}$ 
such that $x_{n_k}\overset{w}\to x$ and $x_{m_k}\overset{w}\to y$ 
for some $x\neq y$ in $S$. 
Let $r_1 :=\limsup_{k \rightarrow \infty} \|x_{n_k}-x\|$ and $r_2:=\limsup_{k \rightarrow \infty}\|x_{m_k}-y\|$, where
w.l.o.g. 
$r_1 \leq r_2$. 
By \eqref{eq:equiv-Opial}, 
we have the inequality
\begin{align}\label{eq:inequalityOpial}
& r_2 <\limsup_{k \rightarrow \infty} \|x_{m_k}-x\|.
\end{align}
For every $\varepsilon>0$, there exists $k_0$ such that $\|x_{n_k}-x\|<r_1+\varepsilon$ for all 
$k\geq k_0$. 
By Fej\'er monotonicity with respect to $S$ we obtain that 
$\|x_{m_k}-x\| \le \|x_{n_k}-x\| < r_1 +\varepsilon$ whenever $m_k\geq n_{k_0}$. 
Consequently, there exists $k_1>0$ 
such that $\|x_{m_k}-x\|<r_2+\varepsilon$ for all
$k \geq k_1$. 
However this contradicts \eqref{eq:inequalityOpial}. 

Second, we show that the whole sequence $x_n\overset{w}\to x$ 
for some $x\in S$. Let $x\in S$ be the unique 
weak cluster point of the sequence $(x_n)_{n \in \N}$. 
If the whole sequence does not weakly converge to $x$,
then there is a weakly open neighborhood $U$ of $x$ 
such that $X\setminus U$ contains infinitely many terms of the sequence $(x_n)_{n \in \N}$. 
On the other hand, $\{x_n\,:\,x_n\in X\setminus U\}$ is bounded, 
so that it has a weakly convergent subsequence. 
Let $y$ be its weak cluster point. 
Since $X\setminus U$ is weakly closed and hence weakly sequentially closed, 
we know that $y\in X\setminus U$. By construction $y\neq x$. 
which contradicts the uniqueness of the weak cluster point.
\end{proof}

Here is Opial's theorem \cite{Opial} together with an alternative proof.

\begin{thm} \label{thm:opial}
	Let $(X,\|\cdot\|)$ be a uniformly convex Banach space satisfying Opial's property. 
	Let $T:X\to X$ be a nonexpansive operator. 
	If $\Fix T\neq\emptyset$ and $T$ is asymptotic regular, then for any $x_0\in X$, 
	the iterates $x_{n+1}:=Tx_{n}, n\in\mathbb{N}$ converge weakly to an element $x^*\in\Fix T$.  
\end{thm}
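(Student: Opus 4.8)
The plan is to deduce the theorem from Lemma~\ref{l:weakcluster} by checking its two hypotheses for the iterate sequence $x_n := T^n x_0$ and the set $S := \Fix T$.

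First I would observe that $(x_n)_{n\in\N}$ is Fej\'er monotone with respect to $\Fix T$: for any $y\in\Fix T$, nonexpansiveness of $T$ together with $Ty=y$ gives $\|x_{n+1}-y\| = \|Tx_n-Ty\| \le \|x_n-y\|$ for every $n$. In particular $(x_n)$ is bounded, and since a uniformly convex Banach space is reflexive, it has weak cluster points (this is also implicit in Lemma~\ref{l:weakcluster}).

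The crucial step is to show that every weak cluster point of $(x_n)$ lies in $\Fix T$; this is a demiclosedness statement for $\Id-T$ at $0$, and it is here that Opial's property is used. Let $x_{n_k}\overset{w}{\to} x$. Asymptotic regularity of $T$ gives $\|x_{n_k}-Tx_{n_k}\| = \|T^{n_k}x_0 - T^{n_k+1}x_0\| \to 0$. Assume, for contradiction, that $Tx\neq x$. Applying Opial's property to $x_{n_k}\overset{w}{\to} x$ with the point $Tx\in X\setminus\{x\}$ yields
$$
\limsup_{k\to\infty}\|x_{n_k}-x\| < \limsup_{k\to\infty}\|x_{n_k}-Tx\|.
$$
On the other hand, by the triangle inequality and nonexpansiveness,
$$
\|x_{n_k}-Tx\| \le \|x_{n_k}-Tx_{n_k}\| + \|Tx_{n_k}-Tx\| \le \|x_{n_k}-Tx_{n_k}\| + \|x_{n_k}-x\|,
$$
so passing to the $\limsup$ and using asymptotic regularity gives $\limsup_k\|x_{n_k}-Tx\| \le \limsup_k\|x_{n_k}-x\|$, contradicting the strict inequality above. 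Hence $Tx=x$, i.e. $x\in\Fix T$.

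Finally, since $(x_n)$ is Fej\'er monotone with respect to $\Fix T$ and all of its weak cluster points belong to $\Fix T$, Lemma~\ref{l:weakcluster} applies and yields $x_n\overset{w}{\to} x^*$ for some $x^*\in\Fix T$, which is the assertion. The only genuinely delicate point is the demiclosedness argument in the middle; the rest is bookkeeping. One could alternatively invoke Browder's demiclosedness principle directly, but the Opial-based argument above is self-contained under the stated hypotheses.
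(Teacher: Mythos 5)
Your proposal is correct and follows essentially the same route as the paper: Fej\'er monotonicity from nonexpansiveness, the demiclosedness of $\Id-T$ at $0$ via asymptotic regularity combined with Opial's property (you use the $\limsup$ form \eqref{eq:equiv-Opial} and argue by contradiction, the paper uses the $\liminf$ form directly, which is an immaterial difference), and finally Lemma~\ref{l:weakcluster} to upgrade to weak convergence of the whole sequence. No gaps.
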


\begin{proof}
Since by assumption $\Fix T\neq\emptyset$, the operator $T$ is quasi nonexpansive.
For an arbitrary fixed $x_0\in X$, we consider the iterations 
$x_{n+1}:=Tx_n$, $n\in\mathbb{N}$. 
Then the sequence $(x_n)_{n\in\mathbb{N}}$ is F\'ejer monotone 
with respect to $\Fix T$ and therefore  bounded. 
Hence there exists a subsequence $(x_{n_k})_{k\in\mathbb{N}}$ 
which converges weakly to some $x^*\in X$. 
By the triangle inequality and since $T$ is  nonexpansive, we obtain 
$$\|x_{n_k}-Tx^*\|\leq \|x_{n_k}-Tx_{n_k}\|+\|Tx_{n_k}-Tx^*\|\leq \|x_{n_k}-Tx_{n_k}\|+\|x_{n_k}-x^*\|.$$
Therefore, passing to limit inferior, we obtain since $T$ is asymptotic regular
\begin{equation}\label{eq:opial}
\liminf_{k \rightarrow \infty} \|x_{n_k}-Tx^*\|
\leq 
\lim_{k \rightarrow \infty} \|x_{n_k}-Tx_{n_k}\|
+\liminf_{k \rightarrow \infty} \|x_{n_k}-x^*\|=\liminf_{k \rightarrow \infty}\|x_{n_k}-x^*\|,
\end{equation}
and by Opial's property further $x^*=Tx^*$ i.e. $x^*\in\Fix T$. 
Now by same arguments if $(x_{n_m})_{m \in \mathbb N}$ 
is another subsequence of $(x_n)_{n \in \mathbb N}$ 
converging weakly to some element $y^*\in X$, then $y^*\in \Fix T$. 
This means that all weak cluster points of $(x_n)_{n \in \mathbb N}$ lie in $\Fix T$. 
By Lemma \ref{l:weakcluster}, it follows that $x^*=y^*$ 
and that the whole sequence $(x_n)_{n \in \mathbb N}$ weakly converges to $x^*\in \Fix T$. 
\end{proof}

Based on Opial's theorem, we have the following corollary for quasi $\alpha$-firmly nonexpansive operators.

\begin{corollary}	\label{th:fixedpoint}
	Let $(X,\|\cdot\|)$ be an $r$-uniformly convex Banach space satisfying Opial's property and let $T:X\to X$ be a nonexpansive operator. 
	If $T$ is quasi $\alpha$-firmly nonexpansive, then for any $x_0\in X$ the iterates $x_{n+1}:=Tx_n, n\in\mathbb{N}$ converge weakly to an element $x^*\in\Fix T$. 
Moreover, if $\bar{x}_n:=P_{\Fix T}x_n$ for all $n\in\mathbb{N}$, then $\lim_{n \rightarrow \infty} \bar{x}_n=x^*$.
\end{corollary}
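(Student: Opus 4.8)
The plan is to deduce the weak-convergence statement directly from Opial's theorem and then to upgrade it, for the \emph{projected} iterates, to strong convergence by combining Fej\'er monotonicity with the projection inequality \eqref{eq:identity}.

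For the first part I would argue as follows. Since $T$ is quasi $\alpha$-firmly nonexpansive, $\Fix T\neq\emptyset$ by definition, and $T$ is asymptotic regular by Lemma~\ref{l:quasi-asym}. An $r$-uniformly convex space is uniformly convex, and $X$ satisfies Opial's property by hypothesis; as $T$ is nonexpansive, Theorem~\ref{thm:opial} applies and yields $x_n\weakly x^*$ for some $x^*\in\Fix T$. Furthermore $T$ is in particular quasi nonexpansive, so by Lemma~\ref{l:fixedpointset} the set $C:=\Fix T$ is nonempty, closed and convex; by Proposition~\ref{p:projection} the projection $\bar x_n=P_Cx_n$ is well defined and the inequality \eqref{eq:identity} is at our disposal.

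For the second part I would first record two monotone quantities. Since $(x_n)_{n\in\N}$ is Fej\'er monotone with respect to $C$ (which follows from \eqref{eq:quasi}), taking $y=\bar x_n\in C$ in \eqref{eq:fejer} gives $d(x_{n+1},C)\le\|x_{n+1}-\bar x_n\|\le\|x_n-\bar x_n\|=d(x_n,C)$, so $\bigl(d(x_n,C)\bigr)_{n\in\N}$ is non-increasing, hence convergent; likewise $\bigl(\|x_n-y\|\bigr)_{n\in\N}$ converges for each fixed $y\in C$. Next, for $m\ge n$ I would apply \eqref{eq:identity} with $x=x_m$ and $y=\bar x_n$ to obtain
\[
\|x_m-\bar x_m\|^r+\tfrac{c_r}{2}\|\bar x_m-\bar x_n\|^r\le\|x_m-\bar x_n\|^r\le\|x_n-\bar x_n\|^r=d(x_n,C)^r,
\]
the second inequality being Fej\'er monotonicity; hence $\tfrac{c_r}{2}\|\bar x_m-\bar x_n\|^r\le d(x_n,C)^r-d(x_m,C)^r\to 0$ as $n,m\to\infty$, so $(\bar x_n)$ is Cauchy in the Banach space $X$ and, $C$ being closed, $\bar x_n\to\bar x$ for some $\bar x\in C$. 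To identify $\bar x$ with $x^*$: from $\bar x_n\to\bar x$ and Fej\'er monotonicity, $\lim_n\|x_n-\bar x\|=\lim_n d(x_n,C)=:d$, and $d\le\ell:=\lim_n\|x_n-x^*\|$ since $x^*\in C$; if $\bar x\neq x^*$, then Opial's property applied to $x_n\weakly x^*$ (cf.\ \eqref{eq:equiv-Opial}) gives $\ell<d$, a contradiction, so $\bar x=x^*$ and $\bar x_n\to x^*$.

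I expect the main obstacle to be obtaining \emph{strong} convergence of $(\bar x_n)$, not merely boundedness: in a Hilbert space one would exploit the variational inequality $\langle x-P_Cx,\,y-P_Cx\rangle\le 0$, which is unavailable in a general Banach space, and \eqref{eq:identity} is exactly the surrogate that makes the telescoping above close up. A minor additional point is to check that the strong limit of the projections is the \emph{same} point $x^*$ as the weak limit of the iterates, where Opial's property is the natural instrument.
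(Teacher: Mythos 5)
Your argument is correct and follows essentially the same route as the paper: Opial's theorem plus asymptotic regularity (Lemma~\ref{l:quasi-asym}) for weak convergence, then the projection inequality \eqref{eq:identity} combined with Fej\'er monotonicity to telescope $\frac{c_r}{2}\|\bar x_m-\bar x_n\|^r$ and show $(\bar x_n)$ is Cauchy, and finally Opial's property to identify the strong limit of the projections with $x^*$. The only cosmetic difference is that in the last step the paper reuses \eqref{eq:identity} with $y=x^*$ and passes to the liminf, whereas you use the trivial bound $d(x_n,\Fix T)\le\|x_n-x^*\|$; both close the argument the same way.
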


\begin{proof}
The first assertion follows immediately by Theorem \ref{thm:opial} and since by Lemma \ref{l:quasi-asym} 
the operator $T$ is asymptotic regular on $X$. 

To show the second assertion,  notice that by Lemma \ref{l:fixedpointset}, 
the set $\Fix T$ is nonempty, closed and convex. Therefore by Proposition \ref{p:projection} we have that $\bar{x}_n:=P_{\Fix T}x_n$ for all $n\in\mathbb{N}$ is well defined and unique. 
Moreover for all $m,n\in\mathbb{N}$ the following inequalities hold
\begin{equation}
\label{eq:projidentityseq}
\|x_n-\bar{x}_n\|^r+\frac{c_r}{2}\|\bar{x}_n-\bar{x}_m\|^r\leq \|x_n-\bar{x}_m\|^r.
\end{equation}
Then F\'ejer monotonicity of $(x_n)_{n \in \mathbb N}$ with respect to $\Fix T$
and \eqref{eq:projidentityseq} yield for $n\geq m$,
$$\frac{c_r}{2}\|\bar{x}_n-\bar{x}_m\|^r\leq \|x_m-\bar{x}_m\|^r-\|x_n-\bar{x}_n\|^r.$$
Passing in the limit as $m,n\rightarrow+\infty$ implies $\lim_{m,n}\|\bar{x}_n-\bar{x}_m\|=0$. 
Hence, $(\bar{x}_n)_{n \in \mathbb N}$ is a Cauchy sequence in $\Fix T$. 
Since $\Fix T$ is a closed set and hence complete, we conclude 
$\lim_n\bar{x}_n=\bar{x}^*$ for some $\bar{x}^*\in\Fix T$. 
Again, by Proposition \ref{p:projection}, we obtain
\begin{equation}
\label{eq:projidentity}
\|x_n-\bar{x}_n\|^r+\frac{c_r}{2}\|\bar{x}_n-x^*\|^r\leq \|x_n-x^*\|^r
\end{equation}
for all $n\in\mathbb{N}$.
Passing to the limit inferior implies
$$\liminf_{n \rightarrow \infty}\|x_n-\bar{x}^*\|^r+\frac{c_r}{2}\|\bar{x}^*-x^*\|^r
\leq
\liminf_{n \rightarrow \infty}\|x_n-x^*\|^r.$$
Finally, it follows by Opial's property that $\bar{x}^*=x^*$. This completes the proof.
\end{proof}
                                                                                             
To state our next convergence result, we need the notation of differentiability of a norm and demiclosedness of sets.
Recall that $X$ has a Fr\'echet differentiable norm 
$\|\cdot\|$, if  the norm as a function is Fr\'echet differentiable except for $x = 0$.
This is equivalent with the property that
for every $x$ on the unit sphere $S(X)$,
the limit
$\lim_{t\to 0}(\|x+ty\|-\|x\|)/t$ 
exists and is attained uniformly in $y\in S(X)$. Examples of Banach spaces with  Fr\'echet differentiable norm
are the spaces $L_p$, $p \in (1,\infty)$, see \cite[Theorem 8]{Sundr1967}.

A mapping $R:C\subseteq X\to X$ is \emph{demiclosed at} $y\in X$,
if $x_n\overset{w}\to x\in C$ and $Rx_n\to y$ as $n \rightarrow \infty$
implies $Rx=y$. 
If $R$ is demiclosed at every $y\in X$, we say that $R$ is \emph{demiclosed}.
The following theorem states a well-known result of Browder. 

\begin{thm}[Browder's demiclosedness principle \cite{Browder-demi}]	\label{th:browder}
Let $(X,\|\cdot\|)$ be a uniformly convex Banach space and $C\subseteq X$ 
a bounded, closed, convex set. If an operator $T:C\to X$ is nonexpansive, then $\Id-T$ is demiclosed. 
\end{thm}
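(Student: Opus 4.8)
The plan is to prove the contrapositive of demiclosedness by a uniform convexity argument. Suppose $x_n \weakly x$ in $C$ and $(\Id - T)x_n \to y$, and set $z := x - y$. We want to show $(\Id - T)x = y$, equivalently $Tx = z$. First I would record that, since $x_n \weakly x$ and $\|(\Id-T)x_n - y\| \to 0$, we have $Tx_n = x_n - (\Id-T)x_n \weakly x - y = z$. Next I would want to estimate $\|Tx_n - Tx\|$ and $\|Tx_n - Tz\|$ using nonexpansiveness: $\|Tx_n - Tx\| \le \|x_n - x\|$, and we also have $\|x_n - Tz\|$ under control since $x_n \weakly x$ stays bounded. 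The idea is to play the weak limit $z$ of the sequence $(Tx_n)$ against the candidate value $Tx$, and use that in a uniformly convex space a weakly convergent sequence whose norm behaves extremally must force an equality.

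More concretely, I would argue as follows. Let $d := \limsup_n \|x_n - x\|$. By nonexpansiveness $\|Tx_n - Tx\| \le \|x_n - x\|$, so $\limsup_n \|Tx_n - Tx\| \le d$. On the other hand, $Tx_n \weakly z$, so by weak lower semicontinuity of the norm, $\|z - Tx\| \le \liminf_n \|Tx_n - Tx\| \le d$. Also $x_n - x \weakly 0$ while $Tx_n - Tx \weakly z - Tx$; consider the midpoints $\tfrac12\big((x_n - x) + (Tx_n - Tx)\big) = \tfrac12\big((x_n - Tx) + (Tx_n - x)\big)$... rather, the cleaner route is: the sequence $x_n - Tx_n \to y$ strongly, so $x_n - x$ and $Tx_n - Tx$ differ by something converging to $y - (x - Tx) = y - x + Tx$. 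If $Tx \ne z$, i.e. $y \ne x - Tx$, then this difference does not go to zero, and combining $\|x_n - x\| \to$ (along a subsequence) $d$, $\|Tx_n - Tx\| \lesssim d$, with the fact that $\tfrac12\|(x_n-x)+(Tx_n-Tx)\| \le \tfrac12\big(\|x_n - Tx\| + \|Tx_n - x\|\big)$ and both of those are $\liminf$-bounded by $d$ (using weak lsc against the limits $x - Tx$ ... ), one gets via the modulus of uniform convexity that $\|(x_n - x) - (Tx_n - Tx)\| \to 0$, which is exactly $\|(\Id - T)x_n - (\Id - T)x\| \to 0$, hence $(\Id-T)x = \lim (\Id - T)x_n = y$.

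The cleanest packaging uses the uniform convexity in the form: if $\|u_n\| \le d + o(1)$, $\|v_n\| \le d + o(1)$, and $\|u_n + v_n\| \ge 2d - o(1)$, then $\|u_n - v_n\| \to 0$. I would apply this with $u_n := x_n - x$ and $v_n := Tx_n - Tx$ (after passing to a subsequence realizing $d := \limsup\|x_n-x\|$ and discarding the trivial case $d = 0$, where $x_n \to x$ strongly and continuity of $T$ — which follows from nonexpansiveness — finishes immediately). The bound $\|v_n\| \le \|u_n\| \le d + o(1)$ is nonexpansiveness; and $u_n + v_n = (x_n - x) + (Tx_n - Tx)$ has $\liminf \|u_n + v_n\| \ge \|(x - Tx) + (z - Tx)\|$... so I must choose the split so that the weak limit of $u_n + v_n$ has norm $\ge 2d$; this is where one uses that $u_n + v_n = (x_n - Tx) + (Tx_n - x) - (x - Tx) \cdot 0$... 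The main obstacle is precisely getting the reverse inequality $\liminf\|u_n + v_n\| \ge 2d$: this requires identifying the right weak limits and invoking weak lower semicontinuity together with the relation $u_n - v_n = (x_n - Tx_n) - (x - Tx) \to y - (x - Tx)$, so that $u_n + v_n = 2u_n - (u_n - v_n) = 2(x_n - x) - (y - (x-Tx)) + o(1)$, whose weak limit is $-(y - (x - Tx)) =: w$ with $\|w\| = \|(\Id-T)x - y\|$ — but this is small, not large, unless we instead run the argument on $2x_n - x - Tx$. I expect to need the standard trick of comparing $\|x_n - \tfrac12(x + Tx)\|$ (which is $\liminf$-bounded below by $\|x - \tfrac12(x+Tx)\| = \tfrac12\|x - Tx\|$ via weak lsc, and above by... ) — so the real technical heart is a careful bookkeeping of which point to center the uniform-convexity estimate at so that both a lower bound of size $\approx 2d$ on the sum and an upper bound of size $\approx d$ on each summand hold simultaneously. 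Once that is arranged, uniform convexity forces $\|u_n - v_n\|\to 0$ and the conclusion $(\Id-T)x = y$ is immediate.
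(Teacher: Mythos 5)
Your proposal does not close, and you in effect say so yourself: the whole argument hinges on producing the lower bound $\liminf_n\|u_n+v_n\|\ge 2d-o(1)$ for $u_n:=x_n-x$ and $v_n:=Tx_n-Tx$, and this bound is not available. Since $u_n\weakly 0$, the sum $u_n+v_n$ converges weakly to $z-Tx$, and weak lower semicontinuity of the norm only yields $\liminf_n\|u_n+v_n\|\ge\|z-Tx\|\le d$ --- the wrong side of the inequality by a factor of two, and possibly $0$. Re-centering does not repair this: for any choice of base points, the two summands converge weakly to limits whose norms are each at most $d$, so weak lower semicontinuity can never certify that the norm of the sum approaches $2d$; and there is no other tool in your outline that bounds $\|u_n+v_n\|$ from below. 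This is not a bookkeeping issue but the actual mathematical content of the theorem: weak convergence gives no lower control on norms, and that is precisely what makes demiclosedness of $\Id-T$ nontrivial. (A minor additional slip: you write $\|x_n-Tz\|$ and implicitly evaluate $T$ at $z=x-y$, which need not lie in $C$.)

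Note that the paper does not prove this statement at all; it is quoted from Browder as a classical result, so there is no in-paper argument to compare against. A complete proof requires a genuinely different ingredient. The standard route (Goebel--Kirk) first establishes, via the modulus of convexity, that the approximate fixed point sets of $T$ are uniformly approximately convex: for every $\varepsilon>0$ there is $\delta>0$ such that $\|u-Tu\|\le\delta$ and $\|v-Tv\|\le\delta$ imply $\|w-Tw\|\le\varepsilon$ for every convex combination $w$ of $u$ and $v$ (and, by iteration, of finitely many such points). Reducing to $y=0$ by replacing $T$ with $T+y$, one has $\|x_n-Tx_n\|\to 0$; Mazur's theorem lets you approximate the weak limit $x$ in norm by convex combinations of the $x_n$, each of which has small displacement by the lemma, and norm-continuity of $\Id-T$ then gives $x=Tx$. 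If you wish to salvage your outline, you must insert that lemma (or an equivalent, such as Browder's original argument); the three-point uniform convexity trick alone cannot do it.
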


As an immediate application of Browder's demiclosedness principle  we obtain:

\begin{thm}	\label{th:separable}
	Let $(X,\|\cdot\|)$ be a uniformly convex Banach space and 
	$T:X\to X$ a nonexpansive operator. 
	If $\Fix T\neq\emptyset$ and $T$ is asymptotically regular, 
	then, for any $x_0\in X$, the weak cluster points of the iterates $x_{n+1}:=Tx_{n}$, $n\in\mathbb{N}$ 
	belong to $\Fix T$. 
	If additionally $\|\cdot\|$ is Fr\'echet differentiable, 
	then these iterates converge weakly to a certain element in $\Fix T$.
\end{thm}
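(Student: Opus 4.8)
The plan is to split the argument into the two announced parts, exactly as in the proof of Theorem \ref{thm:opial}, but replacing the role of Opial's property by Browder's demiclosedness principle. First I would set $x_{n+1}:=Tx_n$ and observe that since $\Fix T\neq\emptyset$, the operator $T$ is quasi nonexpansive, so by \eqref{eq:fejer} the sequence $(x_n)_{n\in\N}$ is Fej\'er monotone with respect to $\Fix T$; in particular it is bounded, and for each $y\in\Fix T$ the scalar sequence $\|x_n-y\|$ is nonincreasing, hence convergent. Boundedness gives at least one weak cluster point. Now let $x^*$ be any weak cluster point, say $x_{n_k}\weakly x^*$. Choosing a closed ball $C$ containing the bounded set $\{x_n\}$ and $x^*$, Theorem \ref{th:browder} tells us that $\Id-T$ is demiclosed at $0$. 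Since $T$ is asymptotically regular, $(\Id-T)x_{n_k}=x_{n_k}-Tx_{n_k}=x_{n_k}-x_{n_k+1}\to 0$, so demiclosedness yields $(\Id-T)x^*=0$, i.e. $x^*\in\Fix T$. This proves the first assertion: all weak cluster points lie in $\Fix T$.

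For the second assertion, I would show that under Fr\'echet differentiability of the norm the sequence has only one weak cluster point, which — together with boundedness — forces weak convergence of the whole sequence. The standard tool here is that a uniformly convex space with Fr\'echet differentiable norm satisfies the condition that for any bounded sequence $(x_n)$ and any two points $u\neq v$ for which both $\lim_n\|x_n-u\|$ and $\lim_n\|x_n-v\|$ exist, one cannot have a single sequence clustering weakly to both; more precisely, Fr\'echet differentiability of the norm of $X$ is equivalent to the following property: whenever $x_n\weakly x$, $\lim_n\|x_n-x\|$ exists, and $\lim_n\|x_n-z\|$ exists for some $z$, then $\lim_n\big(\|x_n-z\|^2-\|x_n-x\|^2\big)=\|x-z\|^2-0$ cannot be matched by a second weak limit. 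Concretely: suppose $x_{n_k}\weakly x^*$ and $x_{m_j}\weakly y^*$ with $x^*,y^*\in\Fix T$. Because $\|x_n-x^*\|$ and $\|x_n-y^*\|$ both converge along the full sequence (Fej\'er monotonicity), the functional $n\mapsto\|x_n-x^*\|^2-\|x_n-y^*\|^2$ converges, say to $L$. Evaluating along $(n_k)$ using weak lower semicontinuity and the Fr\'echet-differentiability characterization of the norm (which lets one pass the duality map through the weak limit) gives $L\le -\|x^*-y^*\|^2\le 0$ on one side and $L\ge \|x^*-y^*\|^2\ge 0$ evaluating along $(m_j)$; hence $\|x^*-y^*\|=0$. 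Therefore the unique weak cluster point argument from the second half of the proof of Lemma \ref{l:weakcluster} applies verbatim (using a weakly open neighbourhood and weak sequential closedness of its complement), and $x_n\weakly x^*\in\Fix T$.

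The main obstacle is the second part: one needs the precise equivalence between Fr\'echet differentiability of the norm and the ``at most one weak cluster point for Fej\'er monotone sequences'' statement, which in the literature is usually phrased via the uniform weak continuity of the duality mapping $J$ on bounded sets, or via the fact that $X^*$ is uniformly convex iff $X$ has uniformly Fr\'echet differentiable norm — here one only has plain Fr\'echet differentiability, so I would lean on the weaker, classical fact (going back to \v Smulian and used by Opial) that Fr\'echet differentiability of the norm implies $J$ is norm-to-norm continuous and, crucially, that $x_n\weakly x$ together with $\limsup_n\|x_n-x\|\le\limsup_n\|x_n-z\|$ for all $z$ forces $x$ to be the unique such point — this plays exactly the role Opial's property played before. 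Once that lemma is in hand the proof is a direct transcription of the Fej\'er-monotone uniqueness argument already given. I would state this auxiliary fact as a short lemma (or cite \cite{Opial} and \cite{Sundr1967}) rather than reprove it.
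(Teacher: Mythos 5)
Your first part is correct and coincides with the paper's argument: Fej\'er monotonicity gives boundedness, you restrict $T$ to a closed ball containing the iterates, invoke Theorem \ref{th:browder} to get demiclosedness of $\Id-T$ at $0$, and asymptotic regularity then places every weak cluster point in $\Fix T$. No issues there.

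The second part has a genuine gap. Your uniqueness argument rests on the claim that, for $x_{n_k}\weakly x^*$, one can evaluate $\lim_n\bigl(\|x_n-x^*\|^2-\|x_n-y^*\|^2\bigr)$ along the two subsequences and obtain $\pm\|x^*-y^*\|^2$. That computation is a polarization identity, $\|x_n-z\|^2=\|x_n-x\|^2+2\langle x_n-x,x-z\rangle+\|x-z\|^2$, which is available only in Hilbert spaces; in a general uniformly convex Banach space the squared norm does not decompose this way, and ``passing the duality map through the weak limit'' is precisely the step that needs proof. Your proposed fallback --- that $x_n\weakly x$ together with $\limsup_n\|x_n-x\|\le\limsup_n\|x_n-z\|$ for all $z$ forces uniqueness of $x$ --- is essentially Opial's property restated, and it is \emph{not} implied by Fr\'echet differentiability of the norm: the spaces $L_p((0,2\pi))$, $p\in(1,\infty)\setminus\{2\}$, have Fr\'echet differentiable norms yet fail Opial's property, as the paper itself notes. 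The ingredient actually needed, and the one the paper uses, is the Reich/Tan--Xu lemma (\cite[Lemma 2.3]{tan-xu}): for a nonexpansive $T$ with $\Fix T\neq\emptyset$ on a uniformly convex space with Fr\'echet differentiable norm, the limit $\lim_{n\to\infty}\langle x_n,J(u-v)\rangle$ exists for all $u,v\in\Fix T$, where $J$ is the normalized duality mapping. Granting this, two weak cluster points $x,y\in\Fix T$ satisfy $\langle x-y,J(x-y)\rangle=\|x-y\|^2=0$, and the neighbourhood argument from Lemma \ref{l:weakcluster} finishes the proof. The existence of that limit is a nontrivial fact about the iteration itself (it does not follow from the convergence of $\|x_n-u\|$ and $\|x_n-v\|$ alone), so it must be cited or proved; as written, your sketch does not supply it.
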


\begin{proof}
For any $y\in \Fix T$, we get by the nonexpansivity of $T$ 
that $\|Tx_{n+1}-y\|\leq \|Tx_n-y\|$, $n\in\mathbb{N}$. 
Especially, the sequence of iterates $(x_n)_{n\in\mathbb{N}}$ is bounded. 
Then it has a subsequence $(x_{n_k})_{k\in\mathbb{N}}$ 
which converges weakly to a certain element $x^*\in X$. 
Let $\mathbb{B}(x^*,R)$ be the closed ball of radius $R>0$ around $x^*$. 
For sufficiently large $R$, we have $x_{n}\in\mathbb{B}(x^*,R)$ for all $n\in\mathbb{N}$ 
and in particular $(x_{n_k})_{k\in\mathbb{N}} \subset \mathbb{B}(x^*,R)$. 
Let 
$\widetilde{T}:=T|_{\mathbb{B}(x^*,R)}$ 
be the restriction of $T$ on $\mathbb{B}(x^*,R)$.
Then $\widetilde{T}:\mathbb{B}(x^*,R)\to X$ is again a nonexpansive mapping. 
Since $(X,\|\cdot\|)$ is uniformly convex and $\mathbb{B}(x^*,R)$ is a bounded, closed, convex set, 
an application of Browder's demiclosedness principle \ref{th:browder}
implies that $I-\widetilde{T}$ is demiclosed. 
Note that by the asymptotic regularity of $T$, 
we get  $\|(I-\widetilde{T})x_{n_k}\|=\|x_{n_k}-Tx_{n_k}\|\to 0$ as $k\rightarrow+\infty$. By the demiclosedness of $\Id- \widetilde{T}$, 
we obtain $(\Id-\widetilde{T})x^*=0$ or equivalently 
$x^*=\widetilde{T}x^*=Tx^*$. 
Therefore $x^*\in\Fix T$. 
By the same arguments, if $(x_{n_m})_{m\in\mathbb{N}}$ 
is another weakly convergent subsequence, then its weak limit 
lies in $\Fix T$. Thus, all weak cluster points of the original sequence $(x_n)_{n\in\mathbb{N}}$ are in $\Fix T$. 

Now assume that $\|\cdot\|$ is Fr\'echet differentiable. 
Let $J:X\to X^*$ denote the normalized duality mapping defined by $$J(x):=\{f\in X^*\;:\;\langle x,f\rangle=\|x\|^2=\|f\|_*^2\},$$
where $\langle\cdot,\cdot\rangle$ is the dual pairing between $X$ and $X^*$. 
By virtue of \cite[Lemma 2.3]{tan-xu}, we know that 
$\lim_{n\to\infty}\langle x_n, J(u-v)\rangle$ exists for all $u,v\in\Fix T$ and in particular 
$\langle x-y,J(u-v)\rangle=0$, whenever $x, y$ are weak cluster points of $(x_n)_{n\in\mathbb{N}}$. 
Setting $u=x,v=y$, we obtain $\|x-y\|^2=\langle x-y,J(x-y)\rangle=0$ i.e. $x=y$. 
Since the weak cluster points $x$ and $y$ were arbitrary chosen, they all coincide 
with some $x^*\in\Fix T$. 
An application of the same argument as in Lemma \ref{l:weakcluster} yields that $x_n\overset{w}\to x^*\in\Fix T$.
\end{proof}

As a consequence of the last theorem 
we get the following result on nonexpansive mappings that are quasi $\alpha$-firmly nonexpansive.

\begin{corollary}
	\label{th:separable-p}
	Let $(X,\|\cdot\|)$ be an $r$-uniformly convex Banach space 
	and let $T:X\to X$ be a nonexpansive operator. If $T$ is quasi $\alpha$-firmly nonexpansive, 
	then, for any $x_0\in X$, the weak cluster points of the iterates $x_{n+1}:=Tx_{n}, n\in\mathbb{N}$ belong $\in \Fix T$. If additionally $\|\cdot\|$ is Fr\'echet differentiable then these iterates converge weakly to a certain element in $\Fix T$.
\end{corollary}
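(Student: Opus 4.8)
The plan is to recognize this as a direct corollary of Theorem~\ref{th:separable}, so the entire task reduces to verifying that the hypotheses of that theorem are satisfied under the assumption that $T$ is nonexpansive and quasi $\alpha$-firmly nonexpansive. First I would observe that an $r$-uniformly convex Banach space is in particular uniformly convex: this is immediate from the defining inequality \eqref{eq:p1}, or from Proposition~\ref{p:p-uniformcvx} together with the standard fact that a modulus of convexity of power type $r$ forces $\delta_X(\varepsilon)>0$ for all $\varepsilon>0$. Hence the ambient-space hypothesis of Theorem~\ref{th:separable} holds.

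Next I would check the two operator-side hypotheses. By the very definition of a quasi $\alpha$-firmly nonexpansive operator we have $\Fix T\neq\emptyset$, which is the first requirement. For the second, asymptotic regularity, I would invoke Lemma~\ref{l:quasi-asym}, which states precisely that every quasi $\alpha$-firmly nonexpansive operator on an $r$-uniformly convex Banach space is asymptotic regular; so $\lim_{n\to\infty}\|T^{n+1}x_0-T^nx_0\|=0$ for the iterates $x_{n+1}:=Tx_n$. With $T$ assumed nonexpansive, all three hypotheses of Theorem~\ref{th:separable} are now in place.

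Applying Theorem~\ref{th:separable} then yields at once that every weak cluster point of $(x_n)_{n\in\N}$ lies in $\Fix T$, and that if in addition $\|\cdot\|$ is Fr\'echet differentiable, the whole sequence $(x_n)_{n\in\N}$ converges weakly to some $x^*\in\Fix T$. This completes the argument. There is essentially no obstacle here: the proof is a bookkeeping step confirming that "quasi $\alpha$-firmly nonexpansive" supplies both a nonempty fixed point set and asymptotic regularity; the analytic content lives entirely in Lemma~\ref{l:quasi-asym} and in Theorem~\ref{th:separable} (hence ultimately in Browder's demiclosedness principle and Lemma~\ref{l:weakcluster}). The one point worth stating explicitly in the write-up is the passage from $r$-uniform convexity to uniform convexity, so that Theorem~\ref{th:separable} is genuinely applicable.
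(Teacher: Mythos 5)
Your proposal is correct and follows essentially the same route as the paper's own proof: note that $r$-uniform convexity implies uniform convexity, extract $\Fix T\neq\emptyset$ from the definition of quasi $\alpha$-firm nonexpansiveness, obtain asymptotic regularity from Lemma~\ref{l:quasi-asym}, and apply Theorem~\ref{th:separable}. Nothing is missing.
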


\begin{proof}
	Since $T$ is quasi $\alpha$-firmly nonexpansive operator, we have $\Fix T\neq\emptyset$,
	and $T$ is asymptotic regular by Lemma \ref{l:quasi-asym}. On the other hand, an $r$-uniformly convex Banach space is uniformly convex, so that conclusion follows from Theorem \ref{th:separable}.
\end{proof}

\section{Illustrative Examples} \label{sec:illustrations}
In this section, we illustrate by three examples, where the theory of $\alpha$-firmly nonexpansive operators
might be of interest.

\subsection{Deep Learning}   
Recently, neural networks on infinite dimensional spaces have received a certain attention \cite{CP2020}.
For applications of their finite dimensional counterparts, we refer to \cite{HHNPSS2019,HNS2021}.
To this end, let $(X,\|\cdot\|)$ be a real $\ell_p$-space, $p \in (1,\infty)$ and 
$A_{k}:X\to X$, $k=1,2,...,d$ a family of affine mappings which are $\alpha_k$-firmly nonexpansive.
Consider a so-called \textit{stable activation function} $\sigma:\mathbb R \to \mathbb R$ which acts elementwise  on the elements  $x\in X$.
Recall that Combettes and Pesquet \cite{Comb-Pesquet} called an activation function \emph{stable},
if it is increasing, $1$--Lipschitz continuous and $\sigma(0)=0$. 
The first two properties are equivalent to the fact that
$\sigma$ is $\frac12$-averaged. Note that most of the common activation functions are indeed stable.
Clearly, then $\sigma:X \rightarrow X$ (meant componentwise) is also $\frac12$-averaged, since
there is a nonexpansive operator $R:\R \rightarrow \R$ such that
$$
\sigma(x) = \left( \frac12(x_i + R x_i) \right)_{i \in \mathbb N} 
= \frac12 \left( x + (R x_i)_{i \in \mathbb N} \right).
$$
By Proposition \ref{prop:relation}, the activation function is also $\frac12$-firmly nonexpansive on $X$.
Then the neural network of depth $d\geq 1$ given by
\begin{eqnarray} \label{eq:neuralnetwork}
\Phi(x;(A_k)_{k=1}^d, \sigma):=A_d(\sigma(A_{d-1}...A_2(\sigma A_1x)))
\end{eqnarray}
is the composition of $\alpha$-firmly nonexpansive operators. 
Consequently, by Proposition \ref{p:convexcombinations} ii), 
the network itself is an $\alpha$-firmly nonexpansive operator.

\subsection{Semigroup Theory}
	Let $(X,\|\cdot\|)$ be a Banach space and $F:X\to X$ a nonexpansive operator. 
	Given $x\in X$ and $\lambda\in(0,+\infty)$, we define the mapping 
	\begin{equation} 	\label{eq:contraction-mapping}
	G_{x,\lambda}:y\mapsto \frac{1}{1+\lambda}x+\frac{\lambda}{1+\lambda}Fy,\quad y\in X.
	\end{equation}
	Then $G_{x,\lambda}$ is a contraction  with Lipschitz constant $\lambda/(1+\lambda)$. By Banach's fixed point theorem $G_{x,\lambda}$ has a unique fixed point, which we denote by $R_{\lambda}x$. 
	The mapping $x\mapsto R_{\lambda}x$ is called the \emph{resolvent} of $F$. 
	By convention $R_0x:=x$ for all $x\in X$.
	
	\begin{proposition} \label{prop:resolvent}
	Let $(X,\|\cdot\|)$ be an  $r$-uniformly convex Bancah space.
	Then the resolvent $R_{\lambda}$ of a nonexpansive operator $F:X\to X$ is $\alpha$-firmly nonexpansive 
	for any $\lambda>0$ and all $\alpha \geq 1/2$. 
	\end{proposition}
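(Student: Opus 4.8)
The plan is to turn the $\alpha$-firmness inequality for $R_\lambda$ into a single application of the $r$-uniform convexity inequality \eqref{eq:p1}. First I would record the algebraic characterization of the resolvent. Since $u:=R_\lambda x$ is the fixed point of $G_{x,\lambda}$, we have $(1+\lambda)u = x + \lambda Fu$, hence $x = (1+\lambda)u - \lambda Fu$ and $(\Id-R_\lambda)x = x-u = \lambda(u-Fu)$. Writing $v:=R_\lambda y$, $a:=u-v$ and $b:=Fu-Fv$, this gives the two identities $x-y = (1+\lambda)a - \lambda b$ and $(\Id-R_\lambda)x - (\Id-R_\lambda)y = \lambda(a-b)$, which reduce everything to the $a,b$ variables.

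Next I would apply \eqref{eq:p1} with weight $w=\lambda/(1+\lambda)$ to the points $p:=x-y$ and $q:=b=Fu-Fv$. The purpose of this choice of $w$ is that $(1-w)p+wq = a$ and $p-q=(1+\lambda)(a-b)$, so \eqref{eq:p1} becomes
\[
\|a\|^r \le \tfrac{1}{1+\lambda}\|x-y\|^r + \tfrac{\lambda}{1+\lambda}\|b\|^r - \tfrac{c_r}{2}\,\lambda(1+\lambda)^{r-2}\|a-b\|^r .
\]
Using nonexpansiveness of $F$ in the form $\|b\|^r \le \|a\|^r$, the middle term on the right can be absorbed on the left; multiplying through by $(1+\lambda)$ leaves $\|a\|^r \le \|x-y\|^r - \tfrac{c_r}{2}\lambda(1+\lambda)^{r-1}\|a-b\|^r$. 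Substituting $\|a-b\|^r = \lambda^{-r}\|(\Id-R_\lambda)x-(\Id-R_\lambda)y\|^r$ then yields
\[
\|R_\lambda x - R_\lambda y\|^r \le \|x-y\|^r - \tfrac{c_r}{2}\Big(\tfrac{1+\lambda}{\lambda}\Big)^{r-1}\big\|(\Id-R_\lambda)x-(\Id-R_\lambda)y\big\|^r .
\]

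Finally I would compare constants. For $\lambda>0$ and $r\ge 2$ we have $\tfrac{1+\lambda}{\lambda}=1+\tfrac1\lambda>1$, hence $\big(\tfrac{1+\lambda}{\lambda}\big)^{r-1}\ge 1$, whereas $\tfrac{1-\alpha}{\alpha}\le 1$ exactly when $\alpha\ge \tfrac12$. Thus for every $\alpha\in[\tfrac12,1)$ the displayed inequality dominates \eqref{eq:firm1}, which is precisely $\alpha$-firm nonexpansiveness of $R_\lambda$. I do not expect a real obstruction here; the only points requiring care are guessing the weight $w=\lambda/(1+\lambda)$ that collapses the convex combination in \eqref{eq:p1} onto $a=u-v$, and bookkeeping the powers of $\lambda$ and $(1+\lambda)$ when translating back from the $a,b$ variables to $x,y$ and $\Id-R_\lambda$.
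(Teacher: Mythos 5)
Your proof is correct, and it takes a genuinely different route from the paper's. The paper first shows that $R_{\lambda}$ is firmly nonexpansive in the sense of Bruck, i.e.\ $\|R_{\lambda}x-R_{\lambda}y\|\le\|(1-w)x+wR_{\lambda}x-((1-w)y+wR_{\lambda}y)\|$ for all $w\in[0,1)$, by rewriting $R_{\lambda}x$ as a convex combination of $(1-w)x+wR_{\lambda}x$ and $FR_{\lambda}x$ with weight $s=(\lambda-w\lambda)/(1+\lambda-w\lambda)$ and using only the triangle inequality and nonexpansivity of $F$; it then invokes Proposition \ref{p:Bruck} to pass from Bruck-firmness to $\alpha$-firm nonexpansivity for $\alpha\ge\frac12$. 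You instead apply the defining inequality \eqref{eq:p1} of $r$-uniform convexity exactly once, to the pair $x-y=(1+\lambda)a-\lambda b$ and $b=Fu-Fv$ with weight $w=\lambda/(1+\lambda)$, so that the convex combination collapses onto $a=R_{\lambda}x-R_{\lambda}y$; your identities and the bookkeeping of the powers of $\lambda$ and $1+\lambda$ check out. What your argument buys is a quantitatively sharper conclusion: you obtain the coefficient $\frac{c_r}{2}\bigl(\frac{1+\lambda}{\lambda}\bigr)^{r-1}$ in front of $\|(\Id-R_{\lambda})x-(\Id-R_{\lambda})y\|^r$, which exceeds the coefficient $\frac{c_r}{2}$ corresponding to $\alpha=\frac12$ and blows up as $\lambda\to 0$ (consistent with $R_0=\Id$), whereas the paper's route through Proposition \ref{p:Bruck} only yields the $\lambda$-independent constant $\frac{c_r}{2}$. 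What the paper's route buys is a reusable intermediate fact (Bruck-firmness of the resolvent, a purely metric statement needing no uniform convexity) and a shorter derivation given that Proposition \ref{p:Bruck} is already in place. One small presentational point: the nonexpansivity of $R_{\lambda}$ itself, which the paper establishes first, also follows from your final inequality, so nothing is lost.
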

	
	\begin{proof}
	First, we have that $R_{\lambda}$ is nonexpansive, since for any $x,y\in X$ it follows by the nonexpansivity of $F$ that 
	\begin{align*}
	\|R_{\lambda}x-R_{\lambda}y\|
	&\leq \frac{1}{1+\lambda}\|x-y\|+\frac{\lambda}{1+\lambda}\|FR_{\lambda}x-FR_{\lambda}y\|\\
	&\leq \frac{1}{1+\lambda}\|x-y\|+\frac{\lambda}{1+\lambda}\|R_{\lambda}x-R_{\lambda}y\|,
	\end{align*}
	and a rearrangement of terms yields 
	$\|R_{\lambda}x-R_{\lambda}y\|\leq \|x-y\|$. 
	
	Now set 
	$u:=(1-w)x+wR_{\lambda}x$ 
	and 
	$v:=(1-w)y+wR_{\lambda}y$, 
	where 
	$w\in[0,1]$. 
	Let $s:=(\lambda-w\lambda)/(1+\lambda-w\lambda)$. 
	Then straightforward calculations show that $R_{\lambda}x=(1-s)u+sFR_{\lambda}x$ 
	and $R_{\lambda}y=(1-s)v+sFR_{\lambda}y$ and consequently
		$$
		\|R_{\lambda}x-R_{\lambda}y\|\leq (1-s)\|u-v\|+s\|FR_{\lambda}x-FR_{\lambda}y\|
		\leq 
		(1-s)\|u-v\|+s\|R_{\lambda}x-R_{\lambda}y\|.
		$$
		Rearranging terms yield 
		$\|R_{\lambda}x-R_{\lambda}y\|\leq \|u-v\|$. 
		By definition of $u$ and $v$, this means that $R_{\lambda}$ is firmly nonexpansive in the sense of Bruck, 
		see Remark \ref{rem:bruck}, 
		and hence, by Proposition \ref{p:Bruck},
		it follows that $R_{\lambda}$ is $\alpha$-firmly nonexpansive for any 
		$\alpha \geq 1/2$. 
    \end{proof}
		
		Proposition \ref{prop:resolvent} has two important implications. 
		First, we get inclined in studying the fixed point problem $R_{\lambda}x=x$ instead of $Fx=x$. 
		This is further supported by the fact that $\Fix F=\Fix R_{\lambda}$. 
		It is trivial by definition of $R_{\lambda}$ to notice that $x\in\Fix R_{\lambda}$ implies $x\in \Fix F$. 
		For the other direction i.e. when $x\in \Fix F$,  we get from the following chain of equalities and inequalities
		$$0\leq \|R_{\lambda}x-x\|=\frac{\lambda}{1+\lambda}\|FR_{\lambda}x-x\|=\frac{\lambda}{1+\lambda}\|FR_{\lambda}x-Fx\|\leq \frac{\lambda}{1+\lambda}\|R_{\lambda}x-x\|$$
		that $x=R_{\lambda}x$.
		When the space $X$ is $r$-uniformly convex satisfying Opial's property (else a separable space),
		then by Theorem \ref{th:fixedpoint} (Theorem \ref{th:separable-p}), 
		we obtain that for any initialization $x_0\in X$, 
		the iterates $x_{n+1}:=R_{\lambda}x_n$ converge weakly to an element in $\Fix R_{\lambda}$,
		and therefore to an element in $\Fix F$. 
		
		Second, we will demonstrate that there is an intimate relationship between the class of $\alpha$-firmly nonexpansive operators and the theory of strongly continuous semigroups. 
		Let $t>0$ be fixed and choose $n\in \mathbb{N}$. 
		Consider the operator 
		$$T_{n,t}:=\underbrace{R_{\frac{t}{n}}\circ R_{\frac{t}{n}}\circ...\circ R_{\frac{t}{n}}}_{n-\text{times}}.$$ 
		By virtue of Proposition \ref{p:avgcvx} ii)
		it follows that $T_{n,t}$ is itself $\alpha$-firmly nonexpansive with constant $\alpha_n=n^{p-1}/(n^{p-1}+1)$. 
		If $T_tx:=\lim_{n \rightarrow \infty} T_{n,t}x$ for all $x\in X$, 
		then it is evident that $T_t$ 
		is nonexpansive, whenever this limit exists. 
		It can however been shown that such limit always exists 
		and it is uniform in $t$ on bounded intervals, see \cite{Liggett} or \cite[Theorem 4.3.3]{Bacak}). 
		Moreover, the family of operators $(T_t)_{t>0}$ defines a strongly continuous semigroup of nonexpansive operators, i.e.,
		i) $\lim_{t\rightarrow 0}T_tx=x$, 
		ii)  $T_s(T_tx)=T_{s+t}x$ for every $s,t\geq 0$, and 
		iii)  $\|T_tx-T_ty\|\leq \|x-y\|$ for all $x,y\in X$ and $t\geq 0$.
		
\subsection{Contractive Projections in $L_p$ Spaces} 
Let $(X,\|\cdot\|)$ by a Banach space.
An operator $P:X\to X$ is a \emph{contractive projection},
if it is a linear operator, $P^2=P$ and $\|P\|\leq 1$. 
For Lebesgue spaces $X=L_p$, $p \in (1,\infty) \backslash \{2\}$
on a probability measure space $(\Omega,\Sigma, \mu)$,
Ando \cite{Ando} proved that if $P:L_p\to L_p$ is a contractive projection such that $P(1)=1$, 
then $P$ is a conditional expectation $\mathbb{E}_{\mathscr{B}}$ 
with respect to some sub-$\sigma$-algebra $\mathscr{B}$ of $\Sigma$.  
In general, every contractive projection $P$ on $L_p$ induces a canonical conditional expectation. 
Byrne and Sullivan investigated such operators 
with the additional property that $\Id-P$ is also contractive and showed the following result.

\begin{thm}[Structure of contractive projections \cite{Byrne}]\label{t:byrne}
Let $(X,\|\cdot\|)$ be a Lebesgue space on a probability measure space $(\Omega,\Sigma,\mu)$.
Then the operators $P: X \rightarrow X$ and $\Id-P$ are contractive 
if and only if $P=(\Id+U)/2$ for some isometry $U: X \rightarrow X$ satisfying $U^2=\Id$. 
\end{thm}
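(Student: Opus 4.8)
The plan is to treat the two implications separately; the forward one is a routine verification and the reverse one is the substantive part. For ``$\Leftarrow$'' I would simply check that if $P=\tfrac12(\Id+U)$ with $U$ an isometry satisfying $U^2=\Id$, then $P$ is linear, $P^2=\tfrac14(\Id+2U+U^2)=\tfrac14(2\Id+2U)=P$, and $\|Px\|\le\tfrac12(\|x\|+\|Ux\|)=\|x\|$. Since $\Id-P=\tfrac12(\Id+(-U))$ and $-U$ is again an isometry with $(-U)^2=\Id$, the same estimate gives $\|(\Id-P)x\|\le\|x\|$, so both $P$ and $\Id-P$ are contractive.

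For ``$\Rightarrow$'' the only possible candidate is $U:=2P-\Id$: it is linear, $P=\tfrac12(\Id+U)$, and $U^2=4P^2-4P+\Id=\Id$ because $P^2=P$. Hence the whole statement reduces to showing that $U$ is an isometry. Writing $Y:=\operatorname{ran}P$ and $Z:=\ker P=\operatorname{ran}(\Id-P)$, these are closed subspaces with $X=Y\oplus Z$, every $x$ decomposes as $x=y+z$ with $y=Px\in Y$, $z=(\Id-P)x\in Z$, and $Ux=y-z$; as $x$ runs through $X$ the pair $(y,z)$ runs through all of $Y\times Z$. Thus ``$U$ isometric'' means $\|y+z\|=\|y-z\|$ for all $y\in Y$, $z\in Z$, and since $Z$ is a subspace it is enough to prove $\|y-z\|\le\|y+z\|$ for all such $y,z$, i.e. that $U$ is contractive: the reverse inequality then follows by replacing $z$ by $-z$, and $\|x\|=\|U^2x\|\le\|Ux\|\le\|x\|$ forces equality. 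In this language the hypotheses say exactly that $\|y\|\le\|y+z\|$ and $\|z\|\le\|y+z\|$ for all $y\in Y$, $z\in Z$; in particular $\|y+z\|,\|y-z\|\ge\max(\|y\|,\|z\|)$.

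Next I would bring in the geometry of $L_p$, $1<p<\infty$. Fix $y\in Y\setminus\{0\}$ and $z\in Z$; since $tz\in Z$ for every $t\in\R$, the convex map $t\mapsto\|y+tz\|$ attains a global minimum at $t=0$, and as the $L_p$-norm is Fr\'echet differentiable away from the origin its derivative there must vanish. Using the explicit duality map $Jf=|f|^{p-1}\operatorname{sgn}f$ of $L_p$, this reads
\begin{equation}\label{eq:orthoplan}
\int_\Omega |y|^{p-1}\operatorname{sgn}(y)\,z\,d\mu=0\qquad\text{for all }y\in Y,\ z\in Z,
\end{equation}
and, symmetrically, $\int_\Omega|z|^{p-1}\operatorname{sgn}(z)\,y\,d\mu=0$ for all $y\in Y$, $z\in Z$. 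Combining \eqref{eq:orthoplan} with the sandwich $\|y+z\|,\|y-z\|\ge\max(\|y\|,\|z\|)$ from the previous step, with the fact that the range of a contractive projection on $L_p$ is again an isometric copy of an $L_p$-space, and with the sharp (Clarkson-type) uniform convexity inequalities for $L_p$ — whose equality discussion is rigid precisely because $p\ne 2$ — one concludes $\|y+z\|_p=\|y-z\|_p$. Hence $U=2P-\Id$ is the desired isometry with $U^2=\Id$ and $P=\tfrac12(\Id+U)$, completing the proof.

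The hard part will be this last step: upgrading the first-order orthogonality relation \eqref{eq:orthoplan} to the global symmetry $\|y+z\|=\|y-z\|$ of the norm. This genuinely uses the rigidity of contractive complementation in $L_p$ for $p\ne 2$ — for $p=2$ every closed subspace is the range of a contractive projection and the statement is merely the classical fact about orthogonal reflections — and it is the only point where soft functional-analytic reasoning does not suffice.
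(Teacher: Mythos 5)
The paper does not prove this statement at all: Theorem \ref{t:byrne} is imported verbatim from Byrne and Sullivan \cite{Byrne} as a known structure theorem, so there is no in-paper argument to compare yours against. Judged on its own terms, your proposal is correct and complete on the easy direction and on the algebraic reduction of the hard direction: $U:=2P-\Id$ is indeed the only candidate, $U^2=\Id$ follows from $P^2=P$, and the problem does reduce to showing $\|y+z\|=\|y-z\|$ for $y\in\operatorname{ran}P$, $z\in\ker P$. The derivation of the first-order orthogonality relations $\int_\Omega |y|^{p-1}\operatorname{sgn}(y)\,z\,d\mu=0$ (and its symmetric counterpart) from the minimality of $t\mapsto\|y+tz\|$ at $t=0$ is also sound for $1<p<\infty$.

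However, there is a genuine gap at exactly the point you flag yourself: the passage from these first-order conditions to the global identity $\|y+z\|_p=\|y-z\|_p$ is asserted, not proved. The sentence beginning ``Combining \dots one concludes'' names three ingredients (the orthogonality relations, the fact that ranges of contractive projections are isometric $L_p$-spaces, and equality cases in Clarkson-type inequalities) without exhibiting how they interact, and it is not clear that they can be made to interact in the way suggested. Equality in the relevant Clarkson inequality forces disjointness of supports (after suitable normalization), so to invoke it you would first need to show that $y$ and $z$ are, up to a change of density, disjointly supported --- but that disjointness/reflection structure is essentially the content of the Byrne--Sullivan theorem itself, so the argument as sketched is circular. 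Mutual James orthogonality of two complementary subspaces of $L_p$ is a strictly weaker, first-order condition and does not by itself imply that the reflection $y+z\mapsto y-z$ is isometric. The actual proof in \cite{Byrne} goes through the Ando/Tzafriri/Bernau--Lacey structure theory of contractive projections on $L_p$ (weighted conditional expectations) and a measure-theoretic analysis of supports; none of that machinery is reproduced or replaced in your sketch. Until that step is supplied, the forward implication is unproved.
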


This elementary observation puts the theory of conditional expectations in $L_p$ spaces in direct relation 
with the theory of averaged operators and consequently with $\alpha$-firmly nonexpansive operators. 
In particular, we obtain that when $P$ and $\Id-P$ are both contractive, then $P$ and $\Id-P$ 
are $\alpha$-firmly nonexpansive for any $\alpha\geq 1/2$. 

Having contractive operators at hand, it is then desirable to investigate the so-called 
\emph{feasibility problem in Banach spaces}: 
Given a Banach space $X$ and a finite number of subspaces $\mathscr{S}_i\subseteq X$ for $i=1,2,...,n$, find an element $x\in\bigcap_{i=1}^n\mathscr{S}_i$, provided that $\bigcap_{i=1}^n\mathscr{S}_i$ is nonempty. 

A popular technique for solving such a problem, at least in the setting of a Hilbert space, is the method of alternating projections. Here an arbitrary point is chosen which is then projected to some subspace by means of the metric (orthogonal) projection operator, then this projection is projected onto the next subspace and so on it repeats itself in a cyclic order. 
It is known that such a generated sequence always converges weakly to a certain element in the intersection of the subspaces. 
The method of alternating projections was first considered by von Neumann \cite{vNeumann} 
for the case of two intersecting subspaces in a Hilbert space, and then generalized by Halpern \cite{Halpern} to an arbitrary finite number of intersecting subspaces. 
While the theory of metric projections works well in Hilbert spaces, 
this is no longer true for general Banach spaces since metric projections 
are not nonexpansive, see Remark \ref{rem:hilbert}. 
However, we can still solve the feasibility problem, 
at least for Lebesgue spaces which are either separable or satisfy Opial's property, 
by using a method which we will call the \emph{method of alternating contractive projections}. 
Here, for given subspaces $\mathscr{S}_i$, $i=1,2,...,n$,
we require an equal number of contractive projection operators $P_i$ such that 
$P_i(X)=\mathscr{S}_i$, $i=1,2,...,n$. 
However, in a Lebesgue space not every subspace is the image of some contractive projection. 
While it is known that every subspace can be the image of at most one contractive projection, 
only those subspaces which are isometric to an $L_p$ space over some measure space $(\Omega,\Sigma,\mu)$ admit such a property \cite[Theorem 4]{Ando}. 
Such subspaces are known as $L_p$-type subspaces.

Recall that Lebesgue spaces are examples of $r$-uniformly convex Banach spaces with Fr\'echet differentiable norm. Here we restrict ourselves to feasibility problems in Lebesgue spaces $L_p, p\in(1,\infty)\setminus\{2\}$ on a probability measure space $(\Omega,\Sigma,\mu)$, 
where the intersecting subspaces are the images of certain contractive projections. 
We then can show the following result.

\begin{thm}
	\label{th:alternating}
	Let $(X, \|\cdot\|)$ be a Lebesgue space and $\mathscr{S}_i\subseteq X$ be closed, linear subspaces of $L_p$-type 
	that have nonempty intersection. 
	Let $P_i:X\to X$ be contractive projections with contractive complement $\Id-P_i$, 
	such that $P_i(X)=\mathscr{S}_i$, $i=1,2,...,n$. 
	Set $P\coloneqq P_nP_{n-1}...P_2P_1$.
	Then, for any given $x_0\in X$  the iterates $x_{n}:=Px_{n-1}$, $n\in\mathbb{N}$ 
    converge weakly to an element $x^*\in\bigcap_{i=1}^n\mathscr{S}_i$.
\end{thm}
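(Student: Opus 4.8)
The plan is to assemble the claim from the structural results already in hand. First I would apply Theorem~\ref{t:byrne}: since each $P_i$ and its complement $\Id-P_i$ are contractive, there is an isometry $U_i:X\to X$ with $U_i^2=\Id$ and $P_i=(\Id+U_i)/2=(1-\tfrac12)\Id+\tfrac12 U_i$. As $U_i$ is in particular nonexpansive, $P_i$ is $\tfrac12$-averaged, hence by Proposition~\ref{prop:relation} it is $\tfrac12$-firmly nonexpansive on $X$; it is of course also nonexpansive. Next I would identify the fixed point sets with the ranges: since $P_i$ is a linear projection, $P_i^2=P_i$ forces $\Fix P_i=P_i(X)=\mathscr{S}_i$ (if $y=P_ix$ then $P_iy=P_i^2x=P_ix=y$, and the converse is trivial). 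By hypothesis $\bigcap_{i=1}^n\Fix P_i=\bigcap_{i=1}^n\mathscr{S}_i\neq\emptyset$.

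Now I invoke the calculus rules. By Proposition~\ref{p:convexcombinations}~ii), the composition $P=P_nP_{n-1}\cdots P_1$ is $\alpha$-firmly nonexpansive with
$$
\alpha=\Big(1+\frac{1-\tfrac12}{n^{r-1}\cdot\tfrac12}\Big)^{-1}=\frac{n^{r-1}}{n^{r-1}+1}\in(0,1),
$$
where $r\ge2$ is an exponent for which $X=L_p$ is $r$-uniformly convex (namely $r=p$ for $p\ge2$ and $r=p/(p-1)$ for $p\in(1,2)$). Since $\bigcap_{i=1}^n\mathscr{S}_i\subseteq\Fix P$ is nonempty, restricting the defining inequality to $y\in\Fix P$ shows that $P$ is quasi $\alpha$-firmly nonexpansive, and $P$ is nonexpansive as a composition of nonexpansive maps. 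Moreover, Proposition~\ref{p:fixedpointset-compositions} yields $\Fix P=\bigcap_{i=1}^n\Fix P_i=\bigcap_{i=1}^n\mathscr{S}_i$.

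Finally I would pass to the limit. Lebesgue spaces $L_p$, $p\in(1,\infty)\setminus\{2\}$, on a probability space are $r$-uniformly convex and have Fr\'echet differentiable norm, so Corollary~\ref{th:separable-p} applies to the nonexpansive, quasi $\alpha$-firmly nonexpansive operator $P$: for any $x_0\in X$ the iterates $x_n:=Px_{n-1}$ converge weakly to some $x^*\in\Fix P=\bigcap_{i=1}^n\mathscr{S}_i$, as claimed. The one point demanding care is the choice of convergence mechanism: for $p\neq2$ the space $L_p$ need not have Opial's property (e.g.\ $L_p((0,2\pi))$ fails it), so Theorem~\ref{thm:opial} and Corollary~\ref{th:fixedpoint} are unavailable and the argument must be routed through Browder's demiclosedness principle, i.e.\ through Theorem~\ref{th:separable}; this is precisely where the contractive-complement hypothesis earns its keep, since via Byrne's theorem it upgrades each $P_i$ from merely contractive to $\tfrac12$-averaged, which is what feeds the calculus of $\alpha$-firmly nonexpansive operators.
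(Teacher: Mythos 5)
Your proposal is correct and follows essentially the same route as the paper: Byrne's structure theorem makes each $P_i$ a $\tfrac12$-averaged, hence $\tfrac12$-firmly nonexpansive operator, the idempotency argument identifies $\Fix P_i=\mathscr{S}_i$, the composition calculus and Proposition~\ref{p:fixedpointset-compositions} give $\Fix P=\bigcap_{i=1}^n\mathscr{S}_i$, and Corollary~\ref{th:separable-p} (via Fr\'echet differentiability rather than Opial's property) yields the weak convergence. The only cosmetic difference is that you invoke Proposition~\ref{p:convexcombinations}~ii) for the full $\alpha$-firmly nonexpansive composition with an explicit constant before restricting to $\Fix P$, whereas the paper passes directly through the quasi-version Proposition~\ref{p:calculus-quasi}; both are valid, and your remark about why the Opial route is unavailable for $L_p$, $p\neq 2$, matches the paper's reasoning.
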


\begin{proof}
Since for every contractive projection $P_i$ its complement $\Id-P_i$ is itself contractive, there exists by Theorem
\ref{t:byrne} an isometry $U_i:X\to X$ such that $U_i^2=\Id$ and $P_i=(\Id+U_i)/2$. 
In particular, $P_i$ is $\alpha$-firmly nonexpansive for every $i=1,2,...,n$ and $\alpha\geq1/2$. 

Further, we see by the following reasons that $\Fix P_i=\mathscr{S}_i$, $i=1,2,...,n$:
Since $P_i(X)=\mathscr{S}_i$, there exists for any  $y\in \mathscr{S}_i$ an $x\in X$ such that $P_ix=y$. 
By definition $P_i$ is idempotent, so that 
$y=P_ix=P^2_ix=P_iy$. This implies that $y\in\Fix P_i$, and thus $\mathscr{S}_i\subseteq\Fix P_i$. 
For the other direction, note that 
$x=P_ix\in P_i(X)=\mathscr{S}_i$. 
Therefore, $\Fix P_i=\mathscr{S}_i$. 

Then, the assumption $\bigcap_{i=1}^n\mathscr{S}_i\neq\emptyset$ 
is equivalent to $\bigcap_{i=1}^n\Fix P_i\neq\emptyset$. 
In particular, we have $\Fix P_i\neq\emptyset$ for every $i=1,2,...,n$, so that 
$P_i$ is also a quasi $\alpha$-firmly nonexpansive operator. 
By Proposition \ref{p:calculus-quasi}, 
the composition $P:=P_nP_{n-1}...P_2P_1$ is quasi $\alpha$-firmly nonexpansive 
and by Proposition \ref{p:fixedpointset-compositions} 
we have $\Fix P=\bigcap_{i=1}^n\Fix P_i=\bigcap_{i=1}^n\mathscr{S}_i$. 
Moreover, $P$ is nonexpansive operator as a composition of finitely many such operators. 
Since $X$ is a Lebesgue space and in particular an $r$-uniformly convex space with Fr\'echet differentiable norm, then Corollary \ref{th:separable-p} 
implies 
that the iterates $x_{n}:=Px_{n-1}$, $n\in\mathbb{N}$ converge weakly to an element $x^*\in\bigcap_{i=1}^n\mathscr{S}_i$.
\end{proof}

For an illustrative example consider the $\ell_p$ space. Notice that $\ell_p$ is a Lebesgue space $L_p$ on a measure space $(\Omega,\Sigma,\mu)$ where $\Omega:=\mathbb{N}, \Sigma:=2^{\mathbb{N}}$ and $\mu$ is the usual counting measure.  Furthermore let $U, V:\ell_p\to\ell_p$ be operators acting by the formulae $Ux:=(x_2,x_1,x_3,x_4,...)$ and $Vx:=(x_1,x_3,x_2,x_4,...)$ for a given $x\in \ell_p$ where $x:=(x_1,x_2,x_3,...)$. Evidently both $U$ and $V$ are isometries and satisfy $U^2=V^2=\Id$. If $P_U:=(\Id+U)/2$ and $P_V:=(\Id+V)/2$ then in view of Theorem \ref{t:byrne} the operators $P_U, \Id-P_U$ and $P_V, \Id-P_V$ are contractive projections. Moreover let $\mathscr{S}_U$ be the set of elements in $\ell_p$ of the form $(a,a,\ast,\ast,...)$ and $\mathscr{S}_V$ be the set of elements $(\ast,a,a,\ast,...)$ where $a\in\mathbb{R}$. Clearly $\mathscr{S}_U,\mathscr{S}_V$ are closed linear subspaces of $\ell_p$ and are invariant under the isometries $U$ and $V$ respectively. In particular we have $P_U(\ell_p)=\mathscr{S}_U$ and $P_V(\ell_p)=\mathscr{S}_V$. Since $\mathscr{S}:=\mathscr{S}_U\cap\mathscr{S}_V$ is nonempty, in fact it consists of all elements of the form $(a,a,a,\ast,\ast,...)$, then by Theorem \ref{th:alternating} the iterates $x_n:=P_VP_Ux_{n-1}, n\in\mathbb{N}$ converge weakly to an element in $\mathscr{S}$.

Another method for solving feasibility problems in a Hilbert space 
is that of averaged projections. Let us consider 
this method in Lebesgue spaces.

\begin{thm}
	Let $(X, \|\cdot\|)$ be a Lebesgue space and $\mathscr{S}_i\subseteq X$ be closed linear subspaces of $L_p$-type that have nonempty intersection. 
	Let $P_i:X\to X$ be contractive projections with contractive complement $\Id-P_i$, such that 
	$P_i(X)=\mathscr{S}_i$, $i=1,2,...,n$. Set $P:= \sum_{i=1}^nw_iP_i$, where $w_i\in(0,1)$ fulfill
	$\sum_{i=1}^nw_i=1$. Then, for any given $x_0\in X$ the iterates $x_{n}:=Px_{n-1}$, $n\in\mathbb{N}$ converge weakly to an element $x^*\in\bigcap_{i=1}^n\mathscr{S}_i$
\end{thm}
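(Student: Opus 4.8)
The plan is to mimic the proof of Theorem~\ref{th:alternating}, replacing the composition by the convex combination and invoking the convex-combination branches of the earlier calculus results. First I would observe that since each complement $\Id-P_i$ is contractive, Theorem~\ref{t:byrne} furnishes an isometry $U_i:X\to X$ with $U_i^2=\Id$ and $P_i=(\Id+U_i)/2$. As $U_i$ is an isometry it is in particular nonexpansive, so $P_i$ is $\tfrac12$-averaged, and hence $\tfrac12$-firmly nonexpansive by Proposition~\ref{prop:relation}. Also, being a contractive projection, each $P_i$ is a nonexpansive (linear) operator.

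Next I would identify the fixed point sets exactly as in Theorem~\ref{th:alternating}: idempotency of $P_i$ together with $P_i(X)=\mathscr{S}_i$ gives $\Fix P_i=\mathscr{S}_i$. Therefore the hypothesis $\bigcap_{i=1}^n\mathscr{S}_i\neq\emptyset$ is equivalent to $\bigcap_{i=1}^n\Fix P_i\neq\emptyset$; in particular each $\Fix P_i$ is nonempty, so each $P_i$ is quasi $\alpha$-firmly nonexpansive for every $\alpha\geq\tfrac12$. Then Proposition~\ref{p:calculus-quasi} shows that $P=\sum_{i=1}^n w_iP_i$ is again quasi $\alpha$-firmly nonexpansive, and Proposition~\ref{p:fixedpointset-compositions} yields $\Fix P=\bigcap_{i=1}^n\Fix P_i=\bigcap_{i=1}^n\mathscr{S}_i$. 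It is here that the assumption $w_i\in(0,1)$ is used: it is precisely the strict positivity of the weights that lets the inequality $\tfrac{c_r}{2}\sum_{i=1}^n w_i\tfrac{1-\alpha_i}{\alpha_i}\|x-P_ix\|^r\le 0$ force $P_ix=x$ for every $i$, so that $\Fix P$ does not become larger than the common fixed point set. Moreover $P$ is nonexpansive as a convex combination of nonexpansive operators.

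Finally, a Lebesgue space $L_p$, $p\in(1,\infty)\setminus\{2\}$, on a probability measure space is $r$-uniformly convex and has a Fr\'echet differentiable norm, so Corollary~\ref{th:separable-p} applies to $P$: for any $x_0\in X$ the iterates $x_n:=Px_{n-1}$ converge weakly to some $x^*\in\Fix P=\bigcap_{i=1}^n\mathscr{S}_i$, which is the claim.

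I do not expect a genuine obstacle here, since the argument runs parallel to that of Theorem~\ref{th:alternating}; the only point that requires a little care is verifying that the convex-combination branches of Propositions~\ref{p:fixedpointset-compositions} and~\ref{p:calculus-quasi} indeed give $\Fix P=\bigcap_i\Fix P_i$ and not something larger, which is exactly where $w_i>0$ enters. Everything else --- nonexpansiveness of $P$, and the $r$-uniform convexity together with Fr\'echet smoothness of $L_p$ --- is standard and already recorded in the excerpt.
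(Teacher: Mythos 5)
Your proposal is correct and follows exactly the route the paper intends: the paper's own proof is the one-line remark that it ``follows similar arguments as in the last Theorem,'' and you have simply written out that adaptation, replacing the composition branch of Propositions~\ref{p:fixedpointset-compositions} and~\ref{p:calculus-quasi} by their convex-combination branches. Your observation that the strict positivity $w_i>0$ is what forces $P_ix=x$ for every $i$ in the fixed-point identification is a worthwhile detail that the paper leaves implicit.
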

\begin{proof}
	Follows similar arguments as in the last Theorem.
\end{proof}

\appendix
\section{Proof of Proposition \ref{p:avgcvx}}
\begin{proof}
i)	By  \eqref{eq:avg} there exist nonexpansive operators $R_i:X\to X$ 
such that $T_i:=(1-\alpha_i)\Id+\alpha_iR_i$, $i=1,2,...,n$. 
Define $\alpha:=\sum_{i=1}^nw_i\alpha_i$, then 
	$$
	T:=\sum_{i=1}^nw_iT_i=\sum_{i=1}^nw_i(1-\alpha_i)\Id+w_i\alpha_iR_i=(1-\alpha)\Id+\sum_{i=1}^nw_i\alpha_iR_i.
	$$ 
	It suffices to show that $R:=\sum_{i=1}^nw_i\alpha_iR_i/\alpha$ is nonexpansive. 
	This follows for all $x,y\in X$ from 
	$$
	\|Rx-Ry\|\leq\frac{1}{\alpha}\sum_{i=1}^nw_i\alpha_i\|R_ix-R_iy\|
	\leq\frac{1}{\alpha}\sum_{i=1}^nw_i\alpha_i\|x-y\|=\|x-y\|.
	$$
ii)	We prove the second claim by induction starting with two $\alpha$-averaged operators 
 $T_1,T_2:X\to X$  with averaging constants $\alpha_1,\alpha_2\in(0,1)$. 
Then 
	$$T:=T_2T_1=((1-\alpha_2)\Id+\alpha_2R_2)((1-\alpha_1)\Id+\alpha_1R_1)$$
	for some nonexpansive mappings $R_1,R_2:X\to X$ and
	we can rewrite $T$ as 
	\begin{align*}
	T&=(1-\alpha_1)(1-\alpha_2)\Id 
	+ \alpha_1(1-\alpha_2)R_1+\alpha_2R_2((1-\alpha_1)\Id+\alpha_1R_1)\\
	&=(1-\alpha)\Id + \alpha R,
	\end{align*}
	where $\alpha := 1- (1-\alpha_1)(1-\alpha_2)$ and
	$$
	R:=\frac{\alpha_1(1-\alpha_2)}{\alpha}R_1
	+\frac{\alpha_2}{\alpha }R_2((1-\alpha_1)\Id+\alpha_1R_1).
	$$
	For any $x,y\in X$ we have
	\begin{align*}
		\|Rx-Ry\|&\leq\frac{\alpha_1(1-\alpha_2)}{\alpha_1(1-\alpha_2)+\alpha_2}\|R_1x-R_1y\|\\
		&+\frac{\alpha_2}{\alpha_1(1-\alpha_2)+\alpha_2}\|R_2((1-\alpha_1)x+\alpha_1R_1x)- R_2((1-\alpha_1)y+\alpha_1R_1y)\|.
	\end{align*}
By nonexpansivity of $R_1, R_2$ and since $\| \cdot\|$ is convex, we obtain
\begin{align*}
	\|Rx-Ry\|
	&\leq \frac{\alpha_1(1-\alpha_2)}{\alpha}\|x-y\|\\&+\frac{\alpha_2}{\alpha}\|(1-\alpha_1)x
	+\alpha_1R_1x- ((1-\alpha_1)y+\alpha_1R_1y)\|\\
	&=
	\frac{\alpha_1(1-\alpha_2)}{\alpha}\|x-y\|+\frac{\alpha_2}{\alpha}\|(1-\alpha_1)(x-y)+\alpha_1(R_1x-R_1y)\|\\
	&\leq 
	\frac{\alpha_1(1-\alpha_2)}{\alpha}\|x-y\|+\frac{\alpha_2}{\alpha}\|x-y\|\\&=\|x-y\|,
\end{align*} 
so that $R$ is a nonexpansive operator. 

Let us assume the the claim is true for the composition of $n-1$ operators and consider
$T := T_n T_{n-1} \ldots T_1$. By assumption  $\tilde T := T_{n-1} \ldots T_1$ is an $\alpha$-averaged operator 
with constant $\tilde \alpha = 1 - \Pi_{i=1}^{n-1} (1-\alpha_i)$ and by the above considerations $T = T_n \tilde T$
is an averaged operator with constant $\alpha = 1 - (1-\alpha_n)(1-\tilde \alpha) = 1 - \Pi_{i=1}^{n} (1-\alpha_i)$.
\end{proof}

\paragraph{Acknowldegment:}
Funding by the DFG under Germany's Excellence Strategy – The Berlin Mathematics Research Center MATH+ (EXC-2046/1,  Projektnummer:  390685689) is acknowledged. Many thanks to Marzieh Hasannasab for fruitful discussions.

\bibliographystyle{plain}
\bibliography{fne_Berdellima--Steidl}


\end{document}